\newcommand{\pn}{\mathbb{P}(1,n)}
\newcommand{\id}{\mathbf{1}}
\newcommand{\xmm}{x_{m_1+m_2}}
\newcommand{\tpsi}{\widetilde{\psi}}
\newcommand{\Ch}{\mathrm{Ch}}
\newcommand{\IGX}{I_GX}
\newcommand{\inCh}{\ovCh}
\newcommand{\IzeroX}[1]{I_G^{0,V}X}
\newcommand{\Smv}{^V\kern-.5em\mathscr{S}}
\newcommand{\TV}{^V\kern-.4em\mathscr{T}}
\newcommand{\age}{\operatorname{age}}
\newcommand{\cP}{\mathcal{P}}
\newcommand{\cf}{{\mathscr F}}
\newcommand{\cl}{{\mathscr L}}
\newcommand{\euler}{\operatorname{eu}}
\newcommand{\ix}{\stack{X}}
\newcommand{\lam}{\lambda}
\newcommand{\nc}{\mathbb{C}}
\newcommand{\nq}{\mathbb{Q}}
\newcommand{\oCh}{\mathscr{C}\kern-.25em{h}} 
\newcommand{\ovCh}{\widetilde{\oCh}}
\newcommand{\olam}{\widetilde{\lam}}
\newcommand{\pro}{\mathbb{P}}
\newcommand{\psit}{\widetilde{\psi}}
\newcommand{\rk}{\mathrm{rk}}
\newcommand{\stack}[1]{\mathscr {#1}}
\newcommand{\Iner}{\mathfrak{I}} 
\newcommand{\IIX}{{\Iner \kern-.75em \Iner}_{\ix}}
\DeclareMathOperator{\Td}{Td}
\newcommand{\Imult}[1]{\mathbb{I}^{#1}}
\newtheorem{thm}{Theorem}[section] 
\newtheorem{prop}[thm]{Proposition} 
\newtheorem{crl}[thm]{Corollary}
\theoremstyle{definition}
\newtheorem{rem}[thm]{Remark} 
\newtheorem{df}[thm]{Definition} 
\newtheorem{df-pr}[thm]{Definition-Proposition}
\theoremstyle{remark} 
 \renewcommand{\thenota}{\kern-1ex}
\begin{document}
\title{Adams operations on the virtual K-theory of $\mathbb{P}(1,n)$}

\author [T. Kimura]{Takashi Kimura} \address {Department of
Mathematics and Statistics; 111 Cummington Mall, Boston University;
Boston, MA 02215, USA } \email{kimura@math.bu.edu} 

\author [R. Sweet]{Ross Sweet} \address {Department of
Mathematics and Statistics; 111 Cummington Mall, Boston University;
Boston, MA 02215, USA }
\email{rsweet@math.bu.edu}

\date{\today}

\begin{abstract}  
%
%
We analyze the structure of the virtual (orbifold) K-theory ring of the complex orbifold $\pro(1,n)$ and its virtual Adams (or power) operations, by using the non-Abelian localization theorem of Edidin-Graham \cite{EG}. In particular, we identify the group of virtual line elements and obtain a natural presentation for the virtual K-theory ring in terms of these virtual line elements. This yields a surjective homomorphism from the virtual K-theory ring of $\pro(1,n)$ to the ordinary K-theory ring of a crepant resolution of the cotangent bundle of $\pro(1,n)$ which respects the Adams operations. Furthermore, there is a natural subring of the virtual K-theory ring of $\pro(1,n)$ which is isomorphic to the ordinary K-theory ring of the resolution. This generalizes the results of Edidin-Jarvis-Kimura \cite{EJK2} who proved the latter for $n=2,3$.
\end{abstract}

\maketitle \setcounter{tocdepth}{1}

\tableofcontents

\section{Introduction}
We begin by quickly recalling a few facts about the virtual K-theory ring and virtual Adams (or power) operations of a complex orbifold. We will focus upon the case of primary interest to us, $\pro(1,n)$, for simplicity of exposition, keeping in mind that these notions can be defined quite generally, e.g. for toric orbifolds. We refer the interested reader to \cite{EJK1, EJK2} for the general case.

Let $\pro(1,n)$ be the weighted projective line regarded as the toric orbifold $[X/G]$ where the smooth manifold $X = \nc^2-\{(0,0)\}$ has the action of $G$, the complex torus $\nc^*$, given by the map $G\times X\to X$ taking $(\alpha,(z_1,z_2))\mapsto (\alpha z_1,\alpha^n z_2)$. The associated inertia orbifold $I\pro(1,n)$ is the quotient orbifold $[(I_G X)/G ]$ where the inertia manifold $I_G X\subset G\times X$, consisting of all points $(m,x)$ such that $m x = x$, is a $G$-manifold with the induced action $G\times I_G X\to I_G X$ given by $(h,(g,x))\mapsto (h g h^{-1},h x)$. Notice that $I\pro(1,n)$ contains $\pro(1,n)$ via the 
the $G$-equivariant embedding $X\to I_G X$ taking $x\mapsto (1,x)$.

Let $K(I\pro(1,n))$ be the Grothendieck group of complex vector bundles on the orbifold $I\pro(1,n)$ or, equivalently, $K(I\pro(1,n))$ is $K_G(I_G X)$, the Grothendieck group of $G$-equivariant vector bundles on $I_G X$. There are many so-called \textsl{inertial products} on $K(I\pro(1,n))$ in the sense of \cite{EJK1} which make $K(I\pro(1,n))$ into a commutative, unital ring containing the ordinary K-theory ring $K(\pro(1,n))$, called the \textsl{untwisted sector}, as a subring. Recall that the motivating example of an inertial product is the \textsl{orbifold product} which, in K-theory, is due to Adem-Ruan-Zhang \cite{ARZ} and Jarvis-Kaufmann-Kimura \cite{JKK2} (and, in terms of a Lie group presentation, Edidin-Jarvis-Kimura \cite{EJK0}).  This orbifold product is a K-theoretic version of an orbifold product originally introduced in cohomology theory by Chen-Ruan \cite{CR} as the genus $0$, $3$-pointed, degree $0$ stable maps contribution to (orbifold) Gromov-Witten theory.

The \textsl{virtual (orbifold) product} is an inertial product introduced by Gonzalez-Lupercio-Segovia-Uribe-Xicotencatl \cite{GLSUX}. Furthermore, in \cite{EJK2}, it was shown that the virtual K-theory ring of any toric orbifold admits \textsl{virtual Adams (or power) operations}, a collection of ring homomorphisms $\psit^k:K(I\pro(1,n))\to K(I\pro(1,n))$ which satisfy $\psi^1=id$, and the conditions $\psit^k\circ\psit^\ell = \psit^{k\ell}$ for all $k,\ell\geq 1$, yielding the structure of a so-called \textsl{$\psi$-ring.}  In fact, the $\psi$-ring structure on $K(I\pro(1,n))$ induces a $\lam$-ring structure on $K(I\pro(1,n))_\nq$.  In addition, on the untwisted sector $K(\pro(1,n))$, these virtual Adams operations reduce to the ordinary Adams operations which satisfy 
\begin{equation}\label{eq:VirtualLineElt}
\psit^k(\cl) = \cl^k
\end{equation}
for all $k\geq 1$ when $\cl$ is the class of any line bundle.  

The previous equation motivates the following definition: An element $\cl$ in the virtual $\psi$-ring $K(I\pro(1,n))_\nc$ is said to be a \textsl{virtual line element}  if it is invertible and satisfies Equation (\ref{eq:VirtualLineElt}) for all $k\geq 1$. The multiplicative group of virtual line elements $\cP\subset K(I\pro(1,n))_\nc$ contains, in particular, classes of ordinary line bundles on $\pro(1,n)$. Virtual line elements share many of the same properties as ordinary line bundles in ordinary equivariant K-theory. For example, they possess a virtual version of an \textsl{Euler class} constructed from the virtual $\lam$-ring structure and a virtual version of dualization (see \cite{EJK2} for details).

In this paper, we apply non-Abelian localization \cite{EG} to $K(I\pro(1,n))_\nc$ and express the virtual product and virtual Adams operations in terms of natural generators on the localization. We use these generators to calculate the group of virtual line elements $\cP$, show that $\cP$ spans the vector space $K(I\pro(1,n))_\nc$, and then give a simple presentation of the virtual K-theory ring $K(I\pro(1,n))_\nc$ in terms of these virtual line elements. Finally, we show that for a particular crepant resolution, $Z_n$, of the total space of the cotangent bundle of $\pro(1,n)$, there is a surjective homomorphism from the virtual K-theory ring $K(I\pro(1,n))_\nc$ to ordinary K-theory ring $K(Z_n)_\nc$  which respects the $\psi$-ring structures in the spirit of the hyper-K\"{a}hler resolution conjecture. Indeed, there is a summand of the virtual K-theory ring $K(I\pro(1,n))_\nc$, isomorphic to the completion with respect to the augmentation ideal, which is isomorphic to $K(Z_n)_\nc$ as $\psi$-rings.   These results generalize those of \cite{EJK2} where a different presentation of the virtual K-theory ring and the same results for the crepant resolution for $n=2,3$ were obtained without using localization.

Finally, it is worth pointing out that these techniques should apply to general toric orbifolds.  It would be very interesting to study these questions in greater generality.

\subsection*{Acknowledgments}
We would like to thank Dan Edidin, Tyler Jarvis, and Tomoo Matsumura for useful discussions.
%

\section{Review of inertial and virtual K-theory}

This section provides a brief introduction to theory of inertial pairs on smooth Deligne-Mumford stacks developed in \cite{EJK1, EJK2}.  An inertial pair defines a multiplication on the K-theory and Chow ring of the inertia stack.  These rings have compatible power operations for a special case of inertial pairs, which will be of interest here.  Let $X$ be a scheme or algebraic space (or smooth manifold) and $G$ a linear algebraic group acting properly on $X$.  Then the quotient $[X/G]$ is a smooth Deligne-Mumford stack (or complex orbifold).  
\begin{df} Define the \emph{inertia space} of $X$ to be $$\IGX=\{(g,x)\,|\, gx=x\}\subset G\times X.$$ Let $X^m=\{(m,x)\,|\,mx=x\}\subseteq\IGX$.  In particular, $X^1=X$.  The inertia space has an action of the group $G$ via $h\cdot(g,x)=(hgh^{-1},x)$.  We may then define the double inertia space by $$\Imult{2}_GX=\{(g_1,g_2,x)\,|\,g_ix=x,\, i=1,2\}\subset G^2\times X.$$
\end{df}
\begin{df} Let $\mu:\Imult{2}_GX\rightarrow\IGX$ be the composition map $\mu(g_1,g_2,x)=(g_1g_2,x)$, and let $e_i:\Imult{2}_GX\rightarrow\IGX$ for $i=1,2$ be the evaluation maps $e_i(g_1,g_2,x)=(g_i,x)$.
\end{df}
An \emph{inertial product} will be defined on $K_G(\IGX)$ and $A^*_G(\IGX)$ associated to an \emph{inertial pair} $(\stack{R},\stack{S})$ \cite{EJK1}, where $\stack{R}$ is a $G$-equivariant vector bundle on $\Imult{2}_GX$ and $\stack{S}$ is a non-negative class in $K_G(\IGX)_{\mathbb{Q}}$, as we will briefly review below.  In a special case of inertial pairs associated to vector bundles, we will obtain the virtual orbifold product, which will be the focus of the remaining sections of this paper.
\begin{df} For $x,y\in K_G(\IGX)$, define the inertial product on $K_G(\IGX)$ by 
\begin{equation} \label{eq.inprod} x*_{\stack{R}}y=\mu_*(e_1^*(x)\cdot e_2^*(y)\cdot \euler(\stack{R})),
\end{equation}  where $\euler(\stack{R})$ is the Euler class of $\stack{R}$.  Recall that in K-theory, the Euler class is $\lambda_{-1}(\stack{R}^*)$ and in Chow or cohomology is the top Chern class.  The same formula gives the inertial product on $A^*_G(\IGX)$, and on $H_G^*(\IGX)$.
\end{df}
The compatibility condition on $\stack{S}$ giving an inertial pair is the identity $$\stack{R}=e_1^*(\stack{S})+e_2^*(\stack{S})-\mu^*(\stack{S})+T_{\mu},$$ where $T_{\mu}$ is the relative tangent bundle.  We refer the reader to \cite{EJK1} \S3 for additional details concerning the definition of an inertial pair.
\begin{prop}[\cite{EJK0},\S3] If $(\stack{R},\stack{S})$ is an inertial pair, then the associated inertial product $*_{\stack{R}}$ on $K_G(\IGX)$ is commutative and associative with identity $\mathbf{1}_X\in K_G(X)$.  Further, $K_G(X)\subseteq K_G(\IGX)$ is a subring where the inertial product reduces to the ordinary product on equivariant K-theory.
\end{prop}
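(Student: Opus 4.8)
The plan is to deduce all four assertions from the geometry of the multiplication map $\mu\colon\Imult{2}_GX\to\IGX$, the projection formula, and the defining ``cocycle'' identity $\stack{R}=e_1^*(\stack{S})+e_2^*(\stack{S})-\mu^*(\stack{S})+T_{\mu}$ of an inertial pair. Two general facts will be used repeatedly: in $K$-theory the Euler class is multiplicative, $\euler(A+B)=\euler(A)\cdot\euler(B)$ (since $\euler=\lambda_{-1}$ of the dual), and pushforwards and pullbacks of $G$-equivariant classes depend only on the induced morphisms of quotient stacks.

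First I would dispatch the identity and commutativity. For the identity I restrict to the closed locus $\{g_2=1\}\subset\Imult{2}_GX$, which is canonically identified with $\IGX$; there $e_2^*(\mathbf{1}_X)$ is the class of the structure sheaf of that locus, and $\mu$ and $e_1$ restrict to the tautological isomorphism onto $\IGX$. The cocycle identity together with the normalizations built into the definition of an inertial pair (\cite{EJK1}\,\S3, which make the $\stack{S}$-contributions and $T_\mu$ cancel over the untwisted sector) should give $\euler(\stack{R})|_{\{g_2=1\}}=1$, whence $x*_{\stack{R}}\mathbf{1}_X=x$ by the projection formula. For commutativity, let $\sigma\colon\Imult{2}_GX\to\Imult{2}_GX$ be the swap $(g_1,g_2,x)\mapsto(g_2,g_1,x)$; then $e_1\circ\sigma=e_2$, $e_2\circ\sigma=e_1$, and $\mu\circ\sigma$ defines the \emph{same} morphism of quotient stacks $[\Imult{2}_GX/G]\to[\IGX/G]$ as $\mu$, the two being intertwined by the $G$-valued function $(g_1,g_2,x)\mapsto g_2$ (because $g_2(g_1g_2)g_2^{-1}=g_2g_1$). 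Hence $(\mu\circ\sigma)_*=\mu_*$, $(\mu\circ\sigma)^*(\stack{S})=\mu^*(\stack{S})$ and $\sigma^*(T_\mu)=T_{\mu\circ\sigma}\cong T_\mu$, so applying $\sigma^*$ to the cocycle identity shows $\sigma^*(\stack{R})$ equals $\stack{R}$ in $K$-theory; combining this with $\sigma_*\sigma^*=\Id$ and the relations above rewrites $x*_{\stack{R}}y$ as $y*_{\stack{R}}x$.

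The crux is associativity, and this is where I expect the real work. I would introduce the triple inertia space $\Imult{3}_GX=\{(g_1,g_2,g_3,x)\mid g_ix=x\}$ with evaluations $\overline{e}_i(g_1,g_2,g_3,x)=(g_i,x)$, full multiplication $\rho(g_1,g_2,g_3,x)=(g_1g_2g_3,x)$, and the two partial multiplications $\Imult{3}_GX\to\Imult{2}_GX$ given by $(g_1,g_2,g_3,x)\mapsto(g_1g_2,g_3,x)$ and $(g_1,g_2,g_3,x)\mapsto(g_1,g_2g_3,x)$. Expanding $(x*_{\stack{R}}y)*_{\stack{R}}z$ and $x*_{\stack{R}}(y*_{\stack{R}}z)$ and applying base change along the evident fiber squares (whose fiber products are identified with $\Imult{3}_GX$), one should be able to write both as $\rho_*\bigl(\overline{e}_1^*x\cdot\overline{e}_2^*y\cdot\overline{e}_3^*z\cdot\euler(\stack{T})\bigr)$ for a bundle $\stack{T}$ on $\Imult{3}_GX$ assembled from pullbacks of $\stack{R}$ and the excess normal bundles of those squares. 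The cocycle identity should then force the two resulting formulas for $\stack{T}$ to agree: the $\stack{S}$-terms telescope (the internal $\mu^*\stack{S}$ contributions cancel) to $\overline{e}_1^*\stack{S}+\overline{e}_2^*\stack{S}+\overline{e}_3^*\stack{S}-\rho^*\stack{S}$, while the $T_\mu$-terms together with the excess normal bundles combine to $T_\rho$ --- and supplying exactly this matching is the reason the $+T_\mu$ summand appears in the definition of an inertial pair. I expect the main obstacle to be the $K$-theoretic bookkeeping: verifying that the relevant fiber products are Tor-independent so that base change holds on the nose, and keeping careful track of the excess normal bundles; this is essentially the content of \cite{EJK1}\,\S3 (and \cite{EJK0}).

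Finally, for the statement that $K_G(X)\subseteq K_G(\IGX)$ is a subring on which $*_{\stack{R}}$ restricts to the ordinary equivariant product: given $x,y\in K_G(X)=K_G(X^1)$, the classes $e_1^*(x)$ and $e_2^*(y)$ are both supported on $\{g_1=g_2=1\}\cong X\subset\Imult{2}_GX$, on which $\mu$ is the identity onto $X\subset\IGX$ and $\euler(\stack{R})=1$ by the same computation used for the identity element; the projection formula then gives $x*_{\stack{R}}y=\mu_*(x\cdot y)=x\cdot y$, as desired.
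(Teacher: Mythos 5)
The paper does not actually prove this proposition: it is recalled verbatim from \cite{EJK0}\,\S3 (with the surrounding definitions taken from \cite{EJK1}\,\S3), so there is no in-paper argument to compare yours against. Judged on its own, your sketch reconstructs the standard proof from those references. The commutativity argument is right and complete in outline: the swap $\sigma$ interchanges $e_1$ and $e_2$, and $\mu\circ\sigma$ agrees with $\mu$ as a morphism of quotient stacks because $(g_2g_1,x)=g_2\cdot(g_1g_2,x)$ (using $g_2x=x$), so $\mu_*=(\mu\circ\sigma)_*$ and $\sigma^*\euler(\stack{R})=\euler(\stack{R})$ follows from applying $\sigma^*$ to the cocycle identity. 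The identity and subring claims are likewise correct: on the components $\{g_2=1\}$ and $\{g_1=g_2=1\}$ of $\Imult{2}_GX$ one has $T_\mu=0$ and the $\stack{S}$-terms cancel (using the normalization $\stack{S}|_{X^1}=0$ from the definition of an inertial pair), so $\euler(\stack{R})$ restricts to $1$ and the projection formula does the rest; since these loci are open and closed components, the support manipulations are unproblematic. For associativity you have correctly identified the mechanism --- base change to the triple inertia space, telescoping of the $\stack{S}$-terms to $\overline{e}_1^*\stack{S}+\overline{e}_2^*\stack{S}+\overline{e}_3^*\stack{S}-\rho^*\stack{S}$, and the $T_\mu$-plus-excess-bundle bookkeeping --- but as written this part is a plan rather than a proof: the Tor-independence of the relevant fiber squares and the identification of the excess normal bundles are exactly the content of the argument in \cite{EJK1}\,\S3, and you defer them rather than carry them out. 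That deferral is the only real incompleteness; everything you do write down is sound.
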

The same result holds for Chow and cohomology.  Furthermore, there is an inertial Chern character map that gives a homomorphism between the inertial K-theory and inertial Chow rings.
\begin{prop}[\cite{EJK1},Prop 3.8] If $(\stack{R},\stack{S})$ is an inertial pair, then the inertial Chern character $$\inCh:K_G(\IGX)_{\mathbb{Q}}\longrightarrow A^*_G(\IGX)_{\mathbb{Q}},$$ given by $$\inCh(V)=\Ch(V)\cdot \Td(-\stack{S}),$$ where $\Ch(V)$ is the ordinary Chern character of $V$, $\Td(V)$ is the Todd class, and $\cdot$ is the ordinary product, is a homomorphism of rings with respect to the inertial products on $K_G(\IGX)$ and $A^*_G(\IGX)$ (or $H_G^*(\IGX)$).
\end{prop}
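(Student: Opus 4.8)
The plan is to deduce this from the Grothendieck–Riemann–Roch (GRR) theorem for the structure map $\mu$, together with the multiplicativity of the Todd class and the compatibility identity $\stack{R}=e_1^*(\stack{S})+e_2^*(\stack{S})-\mu^*(\stack{S})+T_{\mu}$ built into the definition of an inertial pair. Additivity of $\inCh$ is immediate, since $\Ch$ is additive and multiplication by the fixed class $\Td(-\stack{S})$ is linear; so the content is multiplicativity (and unit-preservation). First I would collect the standard ingredients: (i) the ordinary Chern character commutes with arbitrary pullbacks, so $\Ch\circ e_i^*=e_i^*\circ\Ch$, and is a ring homomorphism for the ordinary products; (ii) for a vector bundle $E$ one has $\Ch(\lambda_{-1}(E^*))=\ctop(E)\cdot\Td(E)^{-1}$, verified on Chern roots $a_i$ via $\prod_i(1-e^{-a_i})=\prod_i a_i\cdot\prod_i\tfrac{1-e^{-a_i}}{a_i}$; (iii) the equivariant GRR identity $\Ch(\mu_*\alpha)=\mu_*\bigl(\Ch(\alpha)\cdot\Td(T_{\mu})\bigr)$ for the proper morphism $\mu$, with $T_{\mu}$ the relative tangent class as in \cite{EJK1}; and (iv) multiplicativity of $\Td$, namely $\Td(A+B)=\Td(A)\Td(B)$ and $\Td(-A)=\Td(A)^{-1}$, together with $e_i^*\Td=\Td e_i^*$ and $\mu^*\Td=\Td\mu^*$.

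Next I would expand both sides. Starting from the K-theoretic definition of $x*_{\stack{R}}y$ with $\euler(\stack{R})=\lambda_{-1}(\stack{R}^*)$, apply (iii), then (i) with the ring homomorphism property of $\Ch$, then (ii), and finally the projection formula to pull $\Td(-\stack{S})$ into the pushforward as $\mu^*\Td(-\stack{S})$, to get
\begin{equation*}
\inCh(x*_{\stack{R}}y)=\mu_*\Bigl(e_1^*\Ch(x)\cdot e_2^*\Ch(y)\cdot\ctop(\stack{R})\cdot\Td(T_{\mu})\cdot\Td(\stack{R})^{-1}\cdot\mu^*\Td(-\stack{S})\Bigr).
\end{equation*}
On the Chow side, expanding $\inCh(x)*_{\stack{R}}\inCh(y)$ (where now $\euler(\stack{R})=\ctop(\stack{R})$) and distributing $e_i^*$ over the product $\Ch(x)\Td(-\stack{S})$ gives
\begin{equation*}
\inCh(x)*_{\stack{R}}\inCh(y)=\mu_*\Bigl(e_1^*\Ch(x)\cdot e_2^*\Ch(y)\cdot\ctop(\stack{R})\cdot e_1^*\Td(-\stack{S})\cdot e_2^*\Td(-\stack{S})\Bigr).
\end{equation*}
Comparing the integrands, the claim reduces to $\Td(T_{\mu})\cdot\Td(\stack{R})^{-1}\cdot\mu^*\Td(-\stack{S})=e_1^*\Td(-\stack{S})\cdot e_2^*\Td(-\stack{S})$, which by (iv) is equivalent to $\Td\bigl(T_{\mu}-\stack{R}-\mu^*\stack{S}+e_1^*\stack{S}+e_2^*\stack{S}\bigr)=1$ in $A^*_G(\IGX)_{\mathbb{Q}}$; and the argument of $\Td$ vanishes in $K_G(\IGX)_{\mathbb{Q}}$ precisely by the inertial-pair compatibility identity. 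Unit-preservation follows because $\inCh(\one_X)=\Ch(\one_X)\cdot\Td(-\stack{S})$ and $\Td(-\stack{S})$ restricts to $1$ on the untwisted sector $X$, so $\inCh(\one_X)=\one_X$.

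I expect the only real subtlety to be pinning down the precise version of GRR used for $\mu$: one is working equivariantly (and effectively on a quotient stack), so one must confirm that $\mu$ is a morphism to which GRR in the form (iii) applies and that the relative tangent class $T_{\mu}$ occurring there is the same class that appears in the inertial-pair axiom. Granting (iii), the rest is a purely formal manipulation with the projection formula and with the conversion by $\Td$ of sums in $K$-theory into products in the Chow ring, so the entire content of the proof is carried by the compatibility identity defining $\stack{S}$. The same argument works verbatim with $H_G^*$ in place of $A_G^*$.
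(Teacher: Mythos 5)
The paper merely cites \cite[Prop.~3.8]{EJK1} and gives no proof of its own, so there is no internal argument to match against; but your proof is correct and is exactly the argument one expects for this statement. Reducing multiplicativity to the identity $\Td\bigl(T_{\mu}-\stack{R}-\mu^*\stack{S}+e_1^*\stack{S}+e_2^*\stack{S}\bigr)=1$ via GRR, the projection formula, $\Ch(\lambda_{-1}(E^*))=\ctop(E)\Td(E)^{-1}$, and multiplicativity of $\Td$, and then invoking the inertial-pair compatibility identity to kill the argument of $\Td$, is precisely the intended mechanism --- the compatibility condition is designed to make this cancellation happen. Two small points worth making explicit in a final write-up: unit preservation requires $\stack{S}|_{X}=0$ (part of the inertial-pair axioms, since $\age_{\stack{S}}(X)=0$ is what makes the untwisted sector a subring), and the form of GRR invoked for $\mu$ must be the one whose relative tangent class coincides with the $T_{\mu}$ appearing in the inertial-pair identity --- you flag this yourself, and it is the only genuinely nonformal input.
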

The class $\stack{S}$ is also used to define a new grading on the inertial Chow and K-theory.
\begin{df} The $\stack{S}$-age of a connected component $[U/G]$ of $\IGX$ is defined to be $\age_{\stack{S}}(U)=\rk(\stack{S}|_U).$  For an element $\stack{F}\in K_G(\IGX)$ supported on $U$, its $\stack{S}$-degree is $$\deg_{\stack{S}}\stack{F}=\age_{\stack{S}}(U)\mod\mathbb{Z},$$ giving a $\mathbb{Q}/\mathbb{Z}$-grading on $K_G(\IGX)$.  The $\stack{S}$-grading of an element $x\in A^*_G(\IGX)$ is the rational number $$\deg_{\stack{S}}x|_U=\deg x|_U+\age_{\stack{S}}(U),$$ which gives a $\mathbb{Q}$-grading on Chow. For a class in cohomology, $x\in H_G^*(\IGX)$, the $\stack{S}$-grading is the rational number $$\deg_{\stack{S}}x|_U=\deg x|_U+2\age_{\stack{S}}(U).$$  Denote by $A^{\{q\}}_G(\IGX)$ the subspace of $A^*_G(\IGX)$ of elements of $\stack{S}$-degree $q\in\mathbb{Q}^l$, where $l$ is the number of of connected components of $I\stack{X}$.  The restriction of the inertial Chern character to $A^{\{0\}}_G(\IGX)$, denoted $\inCh^0$, is the inertial rank for $\stack{S}$.
\end{df}
Proposition 3.11 in \cite{EJK1} states that the inertial Chern character homomorphism preserves the $\stack{S}$-degree modulo $\mathbb{Z}$.

With respect to the usual product on K-theory, have the following definition.
\begin{df} Let $Y$ be a manifold, and $G$ a closed algebraic group.  The \emph{augmentation homomorphism} on equivariant K-theory $\epsilon:K_G(Y)\rightarrow K_G(Y)$ is given by $$\epsilon(\stack{F}|_U)=\Ch^0(\stack{F}|_U)\stack{O}_U$$ where $[U/G]$ is a connected component of $[Y/G]$.  If $\stack{F}$ is a vector bundle on $U$, then $\Ch^0(\stack{F}|_U)$ is the rank of $\stack{F}$ on $U$.  The kernel of $\epsilon$, $\mathfrak{a}_Y$, is called the \emph{augmentation ideal}.
\end{df}
\begin{df}  The completion of the ring $K_G(Y)$ with respect to the augmentation ideal is called the \emph{augmentation completion}, $\widehat{K}_G(Y)_{\mathbb{Q}}$.
\end{df}

\begin{df}
An \emph{inertial augmentation homomorphism} is $\widetilde{\epsilon}:K_G(\IGX)\rightarrow K_G(\IGX)$, given by $$\widetilde{\epsilon}(\stack{F}|_U)=\inCh^0(\stack{F}|_U)\stack{O}_U,$$ where $U$ is as above.  We call $(K_G(\IGX),*,1,\widetilde{\epsilon})$ the \emph{augmented inertial ring}.
\end{df}
\begin{rem} By the definition of $\inCh^0$, we have that $$\widetilde{\epsilon}(\stack{F}|_U)=\begin{cases} {\epsilon}(\stack{F}|_U), \hspace{.1in}\textup{if}\,\,\age_{\stack{S}}(U)=0 \\ 0, \hspace{.1in}\textup{otherwise} \end{cases}.$$
\end{rem}

The augmented inertial ring above has compatible Adams operations, as shown in \cite{EJK2}.
\begin{df} Let $R$ be a commutative, unital ring together with a collection of ring homomorphisms $\psi^n:R\rightarrow R$ for all $n\ge 1$.  $R$ is called a $\psi$-ring if for all $x\in R$ and for all $n\ge 1$,
\begin{enumerate}
\item $\psi^1(x)=x$
\item $\psi^n(\psi^m(x))=\psi^{nm}(x)$
\end{enumerate}
The homomorphisms $\psi^n$ are called \emph{Adams} (or \emph{power}) \emph{operations}.
\end{df}
We will also need to consider $\psi$-rings with an augmentation.
\begin{df}
If $(R,\cdot,1,\epsilon)$ is an augmented ring, then $(R,\cdot,1,\psi,\epsilon)$ is an augmented $\psi$-ring if
\begin{equation} \label{eq.augpsicomp} \epsilon(\psi^k(\stack{F}))=\psi^k(\epsilon(\stack{F}))=\epsilon(\stack{F}),
\end{equation}
for all $k\ge 1$.  Set $\psi^0=\epsilon$.
\end{df}

Recall that ordinary equivariant K-theory $K_G(X)$ is a $\psi$-ring whose Adams operations $\psi^k:K_G(X)\to K_G(X)$ are defined though the $\lam$-ring structure of $K_G(X)$. That is, for all $i\geq 0$, define maps $\lam^i:K_G(X)\to K_G(X)$  by demanding that for any $G$-equivariant vector bundle $V$, $\lam^i([V]) := [\Lambda^i V]$, the class of the $i$-th exterior power of $V$, and also by demanding that the series $\lam_t := \sum_{i\geq 0} t^i \lam^i : K_G(X)\to K_G(X)[[t]]$ satisfy the multiplicativity relation $\lam_t(\cf_1+\cf_2) = \lam_t(\cf_1)\lam_t(\cf_2)$ for all $\cf_1, \cf_2$ in $K_G(X)$. These $\lam$-operations make $K_G(X)$ into a so-called $\lam$-ring. The Adams operations are defined through the equality of power series
\begin{equation}\label{eq:LamPsi}
\lam_t(\cf) = \exp\left( \sum_{n\geq 1}(-1)^{n-1} \frac{t^n}{n} \psi^n(\cf)\right).
\end{equation}
In the special case where
 $\stack{L}$ is the class of a line bundle, it follows that
\begin{equation} \label{eq.ordlineelt} \psi^k(\stack{L})=\stack{L}^{\otimes k}.
\end{equation}

If an element $\cf$ in $K_G(X)$ is the class of a rank $n$ vector bundle then $\lam_t(\cf)$ is a degree $n$ polynomial in $t$ and $\lam^n(\cf)$ is an invertible element of $K_G(X)$. An element $\cf$ in $K_G(X)$ which satisfies these properties is said to be a \textsl{$\lam$-positive element of rank $n$.} Although a rank $n$ $\lam$-positive element $\cf$ need not be the class of a rank $n$ vector bundle, in general, they do share many of the properties of vector bundles, e.g. they possess an Euler class in K-theory, Chow and cohomology.  

We will now define the inertial Adams operations. We begin by introducing Bott classes.
\begin{df} Let $Y$ be a manifold, and let $\stack{L}$ be an equivariant line bundle in $K_G(Y)$.  For each $j\ge 1$, the \emph{j-th Bott class of $\stack{L}$} is
\begin{equation} \label{eq.bott} \theta^j(\stack{L})=\frac{1-\stack{L}^j}{1-\stack{L}}=\sum_{i=0}^{j-1}\stack{L}^i.
\end{equation}
The splitting principle is used to extend this definition to any equivariant vector bundle in $K_G(Y)$.  The result is a multiplicative class, where for any vector bundles $\stack{F}_1$ and $\stack{F}_2$, $$\theta^j(\stack{F}_1+\stack{F}_2)=\theta^j(\stack{F}_1)\theta^j(\stack{F}_2).$$
\end{df}
\begin{df}
If $\stack{S}$ can be represented by a vector bundle, then the inertial pair is called \emph{strongly Gorenstein}.
\end{df}
\begin{df}
Suppose the inertial pair $(\stack{R},\stack{S})$ is strongly Gorenstein.  Then for all $k\ge 1$, the \emph{inertial Adams operations} $\widetilde{\psi}^k:K_G(\IGX)\rightarrow K_G(\IGX)$ are given by
\begin{equation} \label{eq.inpsi} \widetilde{\psi}^k(\stack{F})=\psi^k(\stack{F})\cdot\theta^k(\stack{S}^*),
\end{equation}
where $\cdot$ is the usual product on $K_G(\IGX)$.
\end{df}
We are motivated by equation \eqref{eq.ordlineelt} to introduce the following analog of classes of line bundles in ordinary equivariant K-theory.
\begin{df} An invertible element $\stack{L}$ in the inertial K-theory ring $K_G(\IGX)$ satisfying equation \eqref{eq:VirtualLineElt},
\begin{equation*} \widetilde{\psi}^k(\stack{L})=\stack{L}^{*k},
\end{equation*}
for all $k\ge 1$ is called an \emph{inertial line element}.
\end{df}
Notice that after tensoring with $\nq$, an inertial line element in $K_G(\IGX)_\nq$ is nothing more than a $\lam$-positive element of rank $1$ with respect to the inertial $\lam$-ring structure $\olam_t:K_G(\IGX)_\nq\to K_G(\IGX)_\nq$, where $\olam_t := \sum_{i\geq 0} t^i \olam^i$ is defined from the inertial Adams operations  $\psit^k$ through Equation (\ref{eq:LamPsi}) but where $\psi^n$ is replaced by $\psit^n$, $\lam_t$ is replaced by $\olam_t$, and all products are understood to be the inertial product.
\begin{thm}[\cite{EJK2},Theorem 5.16] Let $G$ be a diagonalizable group and let $(\stack{R},\stack{S})$ be a strongly Gorenstein inertial pair on $\IGX$.  The ring $(K_G(\IGX),*_{\stack{R}},\mathbf{1},\widetilde{\epsilon},\widetilde{\psi})$ is an augmented $\psi$-ring.
\end{thm}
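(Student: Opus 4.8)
The plan is to verify the axioms of an augmented $\psi$-ring directly from the formula $\widetilde{\psi}^k(\stack{F})=\psi^k(\stack{F})\cdot\theta^k(\stack{S}^*)$ of \eqref{eq.inpsi}, using four inputs: (a) the ordinary equivariant Adams operations already form a $\psi$-ring on $K_G(\IGX)$ and $K_G(\Imult{2}_GX)$ and commute with the pullbacks $e_1^*,e_2^*,\mu^*$; (b) the Bott classes are multiplicative in their argument, natural under pullback, normalized by $\theta^1=1$, and satisfy the two elementary identities $\psi^k(\euler(E))=\theta^k(E^*)\cdot\euler(E)$ and $\theta^k(E)\cdot\psi^k(\theta^\ell(E))=\theta^{k\ell}(E)$, which reduce by the splitting principle to rearranging the finite sums $1+L+\cdots+L^{k-1}$; (c) for $G$ diagonalizable the map $\mu$ restricts on each component $X^{m_1}\cap X^{m_2}$ of $\Imult{2}_GX$ to the regular closed immersion into $X^{m_1m_2}$, hence is lci with virtual relative tangent bundle $T_\mu=-N_\mu$, where $N_\mu$ is the normal bundle, and the Adams--Riemann--Roch (Bott) formula holds, $\psi^k(\mu_*\beta)=\mu_*\bigl(\theta^k(T_\mu^*)^{-1}\cdot\psi^k\beta\bigr)$, with $\theta^k(T_\mu^*)^{-1}=\theta^k(N_\mu^*)$ an honest K-theory class; and (d) the inertial-pair relation $\stack{R}=e_1^*\stack{S}+e_2^*\stack{S}-\mu^*\stack{S}+T_\mu$, used in the dualized form $\stack{R}^*+\mu^*\stack{S}^*+N_\mu^*=e_1^*\stack{S}^*+e_2^*\stack{S}^*$. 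From (a) and (b) it is immediate that $\widetilde{\psi}^1(\stack{F})=\stack{F}$ and that each $\widetilde{\psi}^k$ is additive, while $\widetilde{\psi}^k(\mathbf{1})=\mathbf{1}$ because $\mathbf{1}=\mathbf{1}_X$ is supported on the untwisted sector, where $\age_{\stack{S}}=0$ forces $\stack{S}|_X=0$ and hence $\theta^k(\stack{S}^*)|_X=1$.

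The crux is that each $\widetilde{\psi}^k$ is multiplicative for $*_{\stack{R}}$. Using \eqref{eq.inpsi} and \eqref{eq.inprod}, then Adams--Riemann--Roch for $\mu$, then the fact that $\psi^k$ is a ring homomorphism, then the identity $\psi^k(\euler(\stack{R}))=\theta^k(\stack{R}^*)\euler(\stack{R})$, and finally the projection formula, one obtains
\begin{align*}
\widetilde{\psi}^k(x*_{\stack{R}}y)
&=\psi^k\Bigl(\mu_*\bigl(e_1^*x\cdot e_2^*y\cdot\euler(\stack{R})\bigr)\Bigr)\cdot\theta^k(\stack{S}^*)\\
&=\mu_*\Bigl(\theta^k(T_\mu^*)^{-1}\cdot\theta^k(\stack{R}^*)\cdot\theta^k(\mu^*\stack{S}^*)\cdot\psi^k(e_1^*x)\cdot\psi^k(e_2^*y)\cdot\euler(\stack{R})\Bigr).
\end{align*}
Applying $\theta^k$ to the dualized relation in (d) collapses the first three factors to $\theta^k(e_1^*\stack{S}^*)\theta^k(e_2^*\stack{S}^*)$, so by the naturality of $\psi^k$ and $\theta^k$ under $e_1^*,e_2^*$ the right-hand side becomes $\mu_*\bigl(e_1^*(\widetilde{\psi}^k x)\cdot e_2^*(\widetilde{\psi}^k y)\cdot\euler(\stack{R})\bigr)$, which is exactly $\widetilde{\psi}^k(x)*_{\stack{R}}\widetilde{\psi}^k(y)$.

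For the composition law one computes $\widetilde{\psi}^k(\widetilde{\psi}^\ell(\stack{F}))=\psi^k\bigl(\psi^\ell(\stack{F})\cdot\theta^\ell(\stack{S}^*)\bigr)\cdot\theta^k(\stack{S}^*)=\psi^{k\ell}(\stack{F})\cdot\bigl(\theta^k(\stack{S}^*)\cdot\psi^k(\theta^\ell(\stack{S}^*))\bigr)=\psi^{k\ell}(\stack{F})\cdot\theta^{k\ell}(\stack{S}^*)=\widetilde{\psi}^{k\ell}(\stack{F})$, using that $\psi^k$ is a ring homomorphism with $\psi^k\circ\psi^\ell=\psi^{k\ell}$ and the second Bott identity of (b). Finally, for the augmentation compatibility, recall from the Remark following the definition of $\widetilde{\epsilon}$ that $\widetilde{\epsilon}$ is supported on the components $[U/G]$ with $\age_{\stack{S}}(U)=0$, where it agrees with the ordinary augmentation and where $\stack{S}|_U=0$, so that $\theta^k(\stack{S}^*)|_U=1$ and $\widetilde{\psi}^k$ restricts to the ordinary $\psi^k$, which fixes $\co_U$ and preserves ranks; on all other components $\widetilde{\epsilon}$ vanishes. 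Hence $\widetilde{\epsilon}(\widetilde{\psi}^k(\stack{F}))=\widetilde{\epsilon}(\stack{F})$, and since $\widetilde{\psi}^k$ fixes each $\co_U$ with $\age_{\stack{S}}(U)=0$, also $\widetilde{\psi}^k(\widetilde{\epsilon}(\stack{F}))=\widetilde{\epsilon}(\stack{F})$.

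The step I expect to be the real obstacle is the multiplicativity computation: one must justify that $\mu$ is lci (a componentwise regular embedding of smooth stacks, using that $G$ is diagonalizable) so that Adams--Riemann--Roch applies, pin down the exact normalization of the Bott correction $\theta^k(T_\mu^*)^{-1}=\theta^k(N_\mu^*)$ --- being careful in particular to evaluate the relevant Bott classes as honest power sums rather than naively as $\tfrac{1-L^k}{1-L}$, since $1-L$ may be a zero divisor --- and bookkeep the pullbacks so that the three correction factors produced by Adams--Riemann--Roch and the projection formula cancel exactly against the dualized inertial-pair relation, leaving nothing over. Once these identities are set up, the remaining verifications are formal.
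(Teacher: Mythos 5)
The paper does not prove this statement: it is quoted verbatim from \cite{EJK2} (Theorem 5.16) and used as a black box, so there is no internal proof to compare against. Your reconstruction is correct and is essentially the argument given in that reference --- multiplicativity via Adams--Riemann--Roch for the regular immersion $\mu$, the identity $\psi^k(\euler(\stack{R}))=\theta^k(\stack{R}^*)\euler(\stack{R})$, and the dualized inertial-pair relation, with the composition law from $\theta^k(E)\cdot\psi^k(\theta^\ell(E))=\theta^{k\ell}(E)$; the only unstated input is the inertial-pair axiom (deferred by the paper to \cite{EJK1}) that $\stack{S}$ restricts to $0$ on the untwisted sector, which you need for $\widetilde{\psi}^k(\mathbf{1})=\mathbf{1}$ and the augmentation compatibility.
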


The virtual orbifold product was defined in \cite{GLSUX}, and in \cite{EJK1} was shown to arise from a particular inertial pair.
\begin{df} The tangent bundle of $\stack{X}$ gives an equivariant bundle $\mathbb{T}\in K_G(X)$ on $X$.  Denote by $\mathbb{T}|_{\Imult{2}_GX}$ the pullback of $\mathbb{T}$ to $\Imult{2}_GX$ via the projection map from $\Imult{2}_GX$ to $X$.  Further, let $\mathbb{T}_{\IGX}$ be the tangent bundle of $I\stack{X}$ and $\mathbb{T}_{\Imult{2}_GX}$ be the tangent bundle of $I^2\stack{X}$.  The virtual obstruction class $\stack{R}=\mathbb{T}^{virt}\in K_G(\Imult{2}_GX)$ is defined by \begin{equation} \label{eq.virr} \mathbb{T}|_{\Imult{2}_GX}+\mathbb{T}_{\Imult{2}_GX}-e_1^*\mathbb{T}_{\IGX}-e_2^*\mathbb{T}_{\IGX}.\end{equation}
\end{df}
\begin{df} Let $\mathbf{N}$ be the normal bundle of the projection $\IGX\rightarrow X$
\end{df}
\begin{prop}[\cite{EJK1},Proposition 4.3.2] The pair $(\mathbb{T}^{virt},\mathbf{N})$ is a strongly Gorenstein inertial pair.
\end{prop}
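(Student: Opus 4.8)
The plan is to verify the two defining properties. The first is the compatibility identity $\mathbb{T}^{virt} = e_1^*(\mathbf{N}) + e_2^*(\mathbf{N}) - \mu^*(\mathbf{N}) + T_{\mu}$ making $(\mathbb{T}^{virt},\mathbf{N})$ an inertial pair, together with the requirements built into that notion that $\mathbb{T}^{virt}$ be a genuine $G$-equivariant vector bundle and $\mathbf{N}$ be a non-negative class. The second is that $\mathbf{N}$ itself be representable by a vector bundle, which is precisely the strongly Gorenstein condition. The identity is formal, whereas the vector-bundle claims reduce to an elementary local computation that is transparent here because $G$ is diagonalizable.

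To prove the identity, I would use that $\mathbf{N} = \mathbb{T}|_{\IGX} - \mathbb{T}_{\IGX}$, where $\mathbb{T}|_{\IGX}$ denotes the pullback of $\mathbb{T}$ along the projection $\IGX \to X$, and that the relative tangent bundle of $\mu$ is $T_{\mu} = \mathbb{T}_{\Imult{2}_GX} - \mu^*\mathbb{T}_{\IGX}$. Since $e_1$, $e_2$ and $\mu$ all cover the same projection $\Imult{2}_GX \to X$, each of $e_1^*$, $e_2^*$, $\mu^*$ sends $\mathbb{T}|_{\IGX}$ to $\mathbb{T}|_{\Imult{2}_GX}$. Substituting, the three $\mathbf{N}$-terms on the right contribute $\mathbb{T}|_{\Imult{2}_GX} - e_1^*\mathbb{T}_{\IGX} - e_2^*\mathbb{T}_{\IGX} + \mu^*\mathbb{T}_{\IGX}$; adding $T_{\mu}$ cancels the $\mu^*\mathbb{T}_{\IGX}$, leaving $\mathbb{T}|_{\Imult{2}_GX} + \mathbb{T}_{\Imult{2}_GX} - e_1^*\mathbb{T}_{\IGX} - e_2^*\mathbb{T}_{\IGX}$, which equals $\mathbb{T}^{virt}$ by \eqref{eq.virr}.

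That $\mathbf{N}$ is a vector bundle is almost by construction: writing $\IGX = \coprod_m X^m$, each $X^m$ is smooth because $G$ is diagonalizable, so $\IGX \to X$ restricts on $X^m$ to a regular embedding whose genuine normal bundle is the moving part of $TX|_{X^m}$. Hence $\mathbf{N}$ is an honest $G$-equivariant vector bundle, in particular a non-negative class, and the pair will be strongly Gorenstein once it is shown to be an inertial pair. For $\mathbb{T}^{virt}$, I would write $\Imult{2}_GX = \coprod_{(m_1,m_2)} (X^{m_1}\cap X^{m_2})$; the $G$-action splits $TX$ over a component $X^{m_1}\cap X^{m_2}$ into weight sub-bundles $\bigoplus_\chi (TX)_\chi$, and since the Lie-algebra contributions cancel in the combination \eqref{eq.virr}, $\mathbb{T}^{virt}$ may be computed from these manifold tangent bundles. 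Reading it off weight by weight, the multiplicity of $(TX)_\chi$ is $1 + ab - a - b = (1-a)(1-b)$, where $a$ and $b$ are the indicators of $\chi(m_1)=1$ and $\chi(m_2)=1$; so $\mathbb{T}^{virt}$ restricts on each component to the honest sub-bundle $\bigoplus_{\chi(m_1)\neq 1,\ \chi(m_2)\neq 1}(TX)_\chi$ of $TX$. Combined with the identity above, this shows $(\mathbb{T}^{virt},\mathbf{N})$ is an inertial pair, and it is strongly Gorenstein because $\mathbf{N}$ is a vector bundle.

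I expect the main obstacle to be the bookkeeping in the local weight analysis: one must be sure that the eigenvalue decomposition of $TX$ over $X^{m_1}\cap X^{m_2}$ really is a $G$-equivariant splitting into sub-bundles (valid since $m_1$ and $m_2$ act there through the diagonalizable group $G$), and that $\mathbb{T}_{\IGX}$ and $\mathbb{T}_{\Imult{2}_GX}$ restrict to the fixed sub-bundles for $m_i$ and for the pair $(m_1,m_2)$ respectively, so that \eqref{eq.virr} indeed collapses to the weight count above. For $G = \nc^*$ all of this is immediate from toric geometry; the general case of \cite{EJK1} instead works over conjugacy data and tracks only the action on normal directions, but that refinement is not needed here.
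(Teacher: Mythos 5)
Your argument is correct. Note that the paper itself offers no proof of this statement: it is quoted verbatim from \cite{EJK1}, Proposition 4.3.2, so there is nothing in the present text to compare against. Your verification is essentially the standard one from that source: the compatibility identity follows formally once one writes $\mathbf{N}=\mathbb{T}|_{\IGX}-\mathbb{T}_{\IGX}$ and $T_\mu=\mathbb{T}_{\Imult{2}_GX}-\mu^*\mathbb{T}_{\IGX}$ (with the Lie-algebra summands cancelling), $\mathbf{N}$ is a vector bundle by construction since each $X^m$ is smooth, and the eigenbundle count $1+ab-a-b=(1-a)(1-b)\ge 0$ exhibits $\mathbb{T}^{virt}$ as the honest sub-bundle of $TX$ on which both group elements act nontrivially, which is all that is needed in the diagonalizable setting relevant to $\pn$.
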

\begin{crl}[\cite{EJK1},Corollary 4.3.4]  The virtual orbifold Chow ring from \cite{GLSUX} is isomorphic as a ring to the inertial Chow ring associated to the inertial pair $(\mathbb{T}^{virt},\mathbf{N})$.
\end{crl}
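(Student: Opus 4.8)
The plan is to establish the corollary by tracking the isomorphism back through the general machinery of inertial pairs. Since Proposition 4.3.2 of \cite{EJK1} (quoted just above) tells us that $(\mathbb{T}^{virt},\mathbf{N})$ is a strongly \Gorenstein{} inertial pair, we automatically get an inertial product $*_{\mathbb{T}^{virt}}$ on $A^*_G(\IGX)$ via Equation \eqref{eq.inprod}, together with the inertial Chern character of Proposition 3.8 of \cite{EJK1}, and the $\stack{S}$-grading with $\stack{S}=\mathbf{N}$. First I would recall the explicit definition of the virtual orbifold Chow ring of \cite{GLSUX}: it is built from the graded vector space $\bigoplus_{(g,x)} A^*_G(X^{\langle g\rangle})$ with a product whose structural class on the triple intersection is exactly the excess/obstruction bundle $\mathbb{T}^{virt}$ appearing in Equation \eqref{eq.virr}, and a degree shift by the ``virtual ages'' which are precisely $\age_{\mathbf{N}}$. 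So the content of the corollary is that the two recipes — ``inertial product from the pair $(\mathbb{T}^{virt},\mathbf{N})$'' and ``virtual product of Gonzalez--Lupercio--Segovia--Uribe--Xicotencatl'' — coincide term by term.

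The key steps, in order, are: (1) write out the push-pull formula \eqref{eq.inprod} for $A^*_G(\IGX)$ with $\stack{R}=\mathbb{T}^{virt}$, using the evaluation maps $e_1,e_2$ and $\mu$ from the definitions in \S2, and identify $\euler(\mathbb{T}^{virt})=\ctop(\mathbb{T}^{virt})$ (the top Chern class, per the definition of the inertial product in Chow); (2) match this against the definition of the virtual product in \cite{GLSUX}, checking that their obstruction class agrees with \eqref{eq.virr} — this is where one uses that $\mathbb{T}|_{\Imult{2}_GX}+\mathbb{T}_{\Imult{2}_GX}-e_1^*\mathbb{T}_{\IGX}-e_2^*\mathbb{T}_{\IGX}$ is exactly the virtual normal/obstruction bundle of the relevant evaluation diagram of inertia stacks; (3) verify that the $\mathbf{N}$-grading $\deg_{\mathbf{N}} x|_U = \deg x|_U + \age_{\mathbf{N}}(U)$ reproduces the virtual-age degree shift used in \cite{GLSUX}, which amounts to identifying $\rk(\mathbf{N}|_{X^{\langle g\rangle}})$ with the virtual age of the component; (4) conclude that the identity map on the underlying graded vector spaces is a ring isomorphism, and note it is automatically compatible with the inertial Chern character so the statement for cohomology follows as well. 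Finally I would remark that the same argument, combined with the earlier Proposition on $\mathbf{1}_X$ being the identity, shows the isomorphism is unital.

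The main obstacle I anticipate is step (2): pinning down that the obstruction bundle in the \cite{GLSUX} construction is literally equal, as a $K$-theory (hence Chow) class, to the expression $\mathbb{T}^{virt}$ of \eqref{eq.virr}, rather than merely equal after taking Euler classes or up to a class with trivial Euler class. This requires carefully unwinding the normal-bundle sequences for the maps $\IGX\to X$ and $\Imult{2}_GX\to X$ and comparing with the deformation-theoretic obstruction space of a genus-zero, three-pointed, degree-zero twisted stable map — essentially a bookkeeping exercise with tangent complexes of inertia stacks, but one where sign and pullback conventions must be handled with care. Everything else (the push-pull manipulation, the age computation, the verification that the identity map is multiplicative once the structure constants match) is routine, so the weight of the proof rests there. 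In fact, since this corollary is attributed to \cite{EJK1} as Corollary 4.3.4, the cleanest route is simply to cite that computation; the sketch above is the argument underlying it.
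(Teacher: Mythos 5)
The paper offers no proof of this corollary: it is quoted verbatim as Corollary 4.3.4 of \cite{EJK1}, so there is nothing internal to compare your argument against. Your sketch is nonetheless the right outline: Proposition 4.3.2 supplies the inertial pair, Equation \eqref{eq.inprod} with $\stack{R}=\mathbb{T}^{virt}$ gives the inertial product on $A^*_G(\IGX)$, and the content of the corollary is a term-by-term match of structure classes and age gradings with the construction of \cite{GLSUX}; falling back on the citation, as you do at the end, is exactly what the present paper does.

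One correction to where you locate the difficulty. The obstruction class in the virtual product of \cite{GLSUX} is \emph{not} the deformation-theoretic obstruction space of a genus-zero, three-pointed, degree-zero twisted stable map --- that is the Chen--Ruan/orbifold product, which the introduction of this paper is careful to distinguish from the virtual product. The GLSUX virtual product is defined directly by an excess-intersection formula: on the component of $\Imult{2}_GX$ indexed by a pair of group elements, their obstruction bundle is the excess bundle of the intersection of the two fixed loci inside $X$, which is precisely $\mathbb{T}|_{\Imult{2}_GX}+\mathbb{T}_{\Imult{2}_GX}-e_1^*\mathbb{T}_{\IGX}-e_2^*\mathbb{T}_{\IGX}$ of Equation \eqref{eq.virr}. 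So your step (2), which you flag as the main obstacle, is essentially definitional rather than a delicate comparison of tangent complexes of moduli of twisted stable maps; the remaining checks are your steps (1), (3) and (4), together with the observation that $\rk(\mathbf{N}|_{U})=\age_{\mathbf{N}}(U)$ reproduces the degree shift of \cite{GLSUX}. With that adjustment the plan goes through.
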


\begin{df}
For the virtual K-theory above, we will call $\widetilde{\psi}^k$ \emph{virtual Adams operations} and $\widetilde{\epsilon}$ the \emph{virtual augmentation}.
\end{df}

In \cite{EJK2}, a variant of the hyper-K\"{a}hler resolution conjecture was introduced for inertial K-theory.  Let $\stack{X}=[X/G]$ be an orbifold where $G$ is diagonalizable, and let $Z$ be a hyper-K\"{a}hler resolution of $\mathbb{T}^*\stack{X}$.  There is an isomorphism between $\widehat{K}(I\stack{X})_{\mathbb{C}}$ with the virtual orbifold product and $K(Z)_{\mathbb{C}}$ with the usual product.

\section{Virtual K-theory of $\pn$}

The statements and results in this section are a synopsis of the results from \S7 of \cite{EJK2}.
Let $X=\mathbb{C}^2-\{(0,0)\}$ have an action of the group $\mathbb{C}^*$ given by $\lambda \cdot (z_1,z_2)=(\lambda z_1,\lambda^n z_2)$, for a positive integer $n\ge 2$.  Then the quotient space $[X/\mathbb{C}^*]$ is the global quotient orbifold $\pn$.  As a vector space, the virtual orbifold K-theory is given by the equivariant K-theory of the inertia manifold $I_GX=\coprod_{m=0}^{n-1} X^m$, the disjoint union of the fixed point set of $X$ under the action of an element of the group $\mathbb{Z}/n\mathbb{Z}=\{0,\ldots,n-1\}$ using additive notation.  We view $\mathbb{Z}/n\mathbb{Z}$ as a subgroup of $\mathbb{C}^*$ under the identification $m\mapsto \zeta^m$, for $\zeta=exp(2\pi i/n)$.   With respect to the usual product, when $m=1$, this gives the ring $$K_G(X^0)\cong \frac{\mathbb{Z}[x_0^{\pm1}]}{\langle(x_0-1)(x_0-1)^n\rangle},$$ and when $m\in\{1,\ldots,n-1\}$, $$K_G(X^m)\cong \frac{\mathbb{Z}[x_m^{\pm1}]}{\langle x_m^n-1\rangle}.$$  Denote the identities in each ring above by $1_m$ where $m\in\{0,\ldots,n-1\}$.  Equation \eqref{eq.virr} gives the formula for $\stack{R}$ and $\stack{S}_m:=\stack{S}|_{X^m}=x_m$, so the virtual multiplication in this case can be written as 
\begin{equation} \label{eq.virprodp1n} x_{m_1}^{a_1}*x_{m_2}^{a_2}=x_{m_1+m_2}^{a_1+a_2}\cdot\euler(\stack{S}_{m_1}+\stack{S}_{m_2}-\stack{S}_{m_1+m_2}),
\end{equation}
where the Euler class is
\[
\euler(\stack{S}_{m_1}+\stack{S}_{m_2}-\stack{S}_{m_1 + m_2}) = 
\begin{cases}
1 &\mbox{if either $m_1=0$ or $m_2 = 0$,}\\
1_{m_1+m_2}-2x_{m_1 + m_2}^{-1}+x_{m_1+m_2}^{-2}&\mbox{if $m_1+m_2 = n$, 
$m_1\not=0$ and $m_2\not=0$,} \\
1_{m_1+m_2}-x_{m_1 + m_2}^{-1}&\mbox{otherwise}.
\end{cases}
\]
The virtual augmentation is given by $\widetilde{\epsilon}(x_0^a)=1_0$ and $\widetilde{\epsilon}(x_m^a)=0$ for $m\in\{1,\ldots,n-1\}$.  For the definition of the virtual Adams operations, see equation \eqref{eq.inpsi}.  Here, the Bott classes are given by $$\theta^j(\stack{S}_m^*)=\theta^j(x_m^{-1})=\sum_{i=0}^{j-1}x_m^{-i}.$$

In Proposition 7.22 of \cite{EJK2}, Edidin, Jarvis, and Kimura  prove that a subring of the inertial K-theory of $\mathbb{P}(1,2)$ and $\mathbb{P}(1,3)$ is isomorphic to the ordinary K-theory of a toric crepant resolution of the cotangent bundle of $\mathbb{P}(1,2)$ and $\mathbb{P}(1,3)$, respectively.  The cotangent bundle to $\pn$ is the quotient stack $[(X\times\mathbb{A}^1)/\mathbb{C}^*]$ where the action of $\mathbb{C}^*$ has weights $(1,n,-(n+1))$.  There is a simplicial fan associated to this quotient, and a subdivision of this fan determines the toric crepant resolution of the cotangent bundle.  See \cite{EJK2} \S7.3 for more details.  This resolution has ordinary K-theory
\begin{equation} \label{eq.kres} K(Z_n)=\frac{\mathbb{Z}[\chi_0^{\pm1},\ldots,\chi_{n-1}^{\pm1}]}{\langle\euler(\chi_0),\ldots,\euler(\chi_{n-1})\rangle^2}=\frac{\mathbb{Z}[\chi_0^{\pm1},\ldots,\chi_{n-1}^{\pm1}]}{\langle(\chi_0-1),\ldots,(\chi_{n-1}-1)\rangle^2}.
\end{equation}

\section{Localization}

Henceforth, we will assume complex coefficients unless otherwise stated.  Edidin and Graham \cite{EG} proved the following proposition.
\begin{prop}  Let $G$ be an algebraic group that acts on an algebraic space $Y$ with finite stabilizers.  Then, the localization maps give a direct sum decomposition $$\Gamma:K_G(Y)\stackrel{\sim}{\longrightarrow}\bigoplus_{i}K_G(Y)_{\mathfrak{m}_{\Phi_i}}$$
\end{prop}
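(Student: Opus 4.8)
The plan is to view $K_G(Y)$ (with complex coefficients throughout) as a module over the representation ring $R(G)=K_G(\mathrm{pt})_{\mathbb C}$, to identify its support inside $\Spec R(G)$ as a finite set of maximal ideals $\mathfrak m_{\Phi_1},\dots,\mathfrak m_{\Phi_k}$, and then to invoke a standard fact of commutative algebra: a finitely generated module over a Noetherian ring whose support is a finite set of maximal ideals decomposes canonically as the direct sum of its localizations at those ideals, the isomorphism being the tuple of localization homomorphisms. All of the content therefore lies in describing the support.

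First I would recall that the closed points of $\Spec R(G)$ are the semisimple conjugacy classes of $G$, a class $\Phi$ giving the maximal ideal $\mathfrak m_\Phi$ that is the kernel of the evaluation character $R(G)\to\mathbb C$ at $\Phi$. By Thomason's algebraic concentration theorem (the algebraic analogue of Atiyah--Segal localization), if $\sigma$ represents $\Phi$ then the restriction $K_G(Y)_{\mathfrak m_\Phi}\to K_G(Y^\sigma)_{\mathfrak m_\Phi}$ to the fixed locus $Y^\sigma$ is an isomorphism; hence $\mathfrak m_\Phi\in\Supp_{R(G)}K_G(Y)$ forces $Y^\sigma\neq\emptyset$. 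Since the $G$-action has finite stabilizers, any such $\sigma$ lies in a point stabilizer and so has finite order, and therefore $\Supp K_G(Y)$ is contained in the (countable) set of closed points of $\Spec R(G)$ corresponding to finite-order conjugacy classes.

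Next I would show this support is finite. Being the vanishing locus of the annihilator of a finitely generated $R(G)$-module, $\Supp K_G(Y)$ is Zariski-closed; but a positive-dimensional closed subvariety of $\Spec R(G)$ has uncountably many closed points, so a closed subset contained in a countable set must be zero-dimensional, hence a finite set $\{\mathfrak m_{\Phi_1},\dots,\mathfrak m_{\Phi_k}\}$ of maximal ideals, indexed by the finite-order conjugacy classes $\Phi_i$ with $Y^{\Phi_i}\neq\emptyset$. (Equivalently, finiteness is visible from the fact that the inertia of the finite-stabilizer Deligne--Mumford stack $[Y/G]$ is finite over $[Y/G]$ and hence has only finitely many connected components; in the case $G=\mathbb C^{*}$ acting on $\mathbb C^{2}\setminus\{0\}$ the $\Phi_i$ are simply $\zeta^{m}$, $m=0,\dots,n-1$.)

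Finally, $R(G)/\mathrm{Ann}\,K_G(Y)$ is an Artinian $\mathbb C$-algebra, hence the product of its localizations at the $\mathfrak m_{\Phi_i}$, so $K_G(Y)$, being a finitely generated module over it, splits as $\bigoplus_i K_G(Y)_{\mathfrak m_{\Phi_i}}$; one then checks this splitting is induced by the localization maps, which is exactly $\Gamma$. I expect the one genuinely substantive step to be the finiteness of the support --- that only finitely many conjugacy classes $\Phi_i$ contribute --- which is precisely where the finite-stabilizer hypothesis is indispensable and which rests on combining Thomason's concentration theorem with the Noetherian/finite-type structure of $Y$ (equivalently, with finiteness of the inertia); everything else is formal, and in the concrete situation $Y=\IGX$ of this paper the decomposition is immediate once one knows $\IGX=\coprod_{m=0}^{n-1}X^{m}$.
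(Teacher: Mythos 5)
The paper does not prove this proposition; it is quoted verbatim as a theorem of Edidin--Graham \cite{EG}, so there is no in-paper argument for you to match. Your sketch has the overall shape one expects for a nonabelian localization result (reduce to showing $K_G(Y)$ is supported at finitely many maximal ideals of $R(G)$, then split along the localization maps), and your closing remark that in the case this paper actually uses---$Y=I_GX=\coprod_{m=0}^{n-1}X^m$ with $G=\nc^*$ and the $\Phi_i$ the roots of unity $\zeta^m$---the decomposition is essentially immediate, is correct.

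However, two steps in the general argument are not justified and are exactly where the real content lies. First, your finiteness argument (support is Zariski-closed, hence a closed set inside a countable set, hence finite) silently assumes that $K_G(Y)$ is finitely generated over $R(G)$; without that, $\Supp M$ need not equal $V(\mathrm{Ann}\,M)$, need not be closed, and $R(G)/\mathrm{Ann}\,K_G(Y)$ need not be Artinian. You never establish finite generation, and it is not an automatic consequence of the hypotheses; the Edidin--Graham proof obtains finiteness of the support by a direct geometric induction (passing to slices) rather than by assuming finite generation. Incidentally, once finiteness of the support \emph{is} known, the direct-sum decomposition does not need finite generation either---each element generates a finitely generated $R(G)$-submodule, which splits, and filtered colimits commute with both localization and direct sum---so the commutative algebra you need is both weaker and differently located than you indicate. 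Second, you invoke ``Thomason's algebraic concentration theorem'' at an arbitrary semisimple conjugacy class of a possibly nonabelian $G$. Thomason's theorem is for diagonalizable group actions; the concentration statement for nonabelian $G$ is itself a main result of \cite{EG} and part of the very package you are sketching, so citing it here as a black box risks circularity. What you actually use is only the implication that $\mathfrak m_\Phi$ in the support forces $Y^\sigma\neq\emptyset$, but even that implication has to be established in the nonabelian setting, and your parenthetical appeal to finiteness of the inertia stack is left too vague to substitute for it (you do not explain why finitely many components of the inertia imply finitely many contributing conjugacy classes of $R(G)$).
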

In the case of $\pn$, we take $Y=\IGX$, and the maximal ideals $\mathfrak{m}_i\subseteq R(G)\cong \mathbb{C}[x^{\pm 1}]$, for $i\in\mathbb{Z}/n\mathbb{Z}$, are the maximal ideals of characters vanishing on $\zeta^i$.  Thus, $\mathfrak{m}_0=\langle x-1\rangle$ and $\mathfrak{m}_i=\langle x-\zeta^i\rangle$ where $\zeta=exp(2\pi i/n)$.  We define $$\mathcal{K}=\bigoplus_{l=0}^{n-1}\mathcal{K}_i,$$ where the summands $\mathcal{K}_l$ are the localizations above $$\mathcal{K}_l=K_G(IX)_{\mathfrak{m}_l}.$$  Define generators of the direct sum decomposition of localizations by $$x_{ml}:=\Gamma(x_m)|_{K_G(X^m)_{\mathfrak{m}_l}}.$$  The result is a decomposition as a vector space of the K-theory of $\pn$, as in the diagram below.

$$\scalebox{1.35}{\xymatrix@=0pt{\frac{\mathbb{C}[x_0^{\pm1}]}{\langle(x_0-1_0)(x_0^n-1_0)\rangle} \ar[ddddd]_{\Gamma} &  \oplus & \frac{\mathbb{C}[x_1^{\pm1}]}{\langle x_1^n-1_1\rangle} \ar[ddddd]_{\Gamma} & \oplus &\cdots &\oplus &  \frac{\mathbb{C}[x_{n-1}^{\pm1}]}{\langle x_{n-1}^n-1_{n-1}\rangle}\ar[ddddd]_{\Gamma} \\ &&&&&& \\ &&&&&& \\ &&&&&& \\ &&&&&& \\  \frac{\mathbb{C}[x_{00}^{\pm1}]}{I_{00}} & \oplus & \frac{\mathbb{C}[x_{10}^{\pm1}]}{I_{10}} & \oplus & \cdots & \oplus & \frac{\mathbb{C}[x_{(n-1)0}^{\pm1}]}{I_{(n-1)0}} \\ \oplus && \oplus &&  && \oplus\\ \frac{\mathbb{C}[x_{01}^{\pm1}]}{I_{01}} & \oplus & \frac{\mathbb{C}[x_{11}^{\pm1}]}{I_{11}} & \oplus & \cdots & \oplus & \frac{\mathbb{C}[x_{(n-1)1}^{\pm1}]}{I_{(n-1)1}} \\ \\ \oplus && \oplus &&  && \oplus\\ \vdots && \vdots &&  && \vdots \\ \\ \oplus && \oplus &&  && \oplus\\ \frac{\mathbb{C}[x_{0(n-1)}^{\pm1}]}{I_{0(n-1)}} & \oplus & \frac{\mathbb{C}[x_{1(n-1)}^{\pm1}]}{I_{1(n-1)}} & \oplus & \cdots & \oplus & \frac{\mathbb{C}[x_{(n-1)(n-1)}^{\pm1}]}{I_{(n-1)(n-1)}}}}$$
where $I_{ml}=\langle x_{ml}-\zeta^l1_{ml}\rangle$ for all $m$ and $l$, except for $I_{00}=\langle(x_{00}-1_{00})^2\rangle$.

\begin{prop} The rows in $\mathcal{K}$ are the localizations $\mathcal{K}_l$ for $l=0,\ldots,n-1$, and the inverse images of the generators of $\mathcal{K}$ are 
\begin{align} 
\Gamma^{-1}(1_{00}) &= \frac{1}{2n}((1-n)x_0+(1+n))\frac{x_0^n-1}{x_0-1} \label{eq.100} \\ 
\Gamma^{-1}(x_{00}) &= \frac{1}{2n}((3-n)x_0+(n-1))\frac{x_0^n-1}{x_0-1}  \label{eq.x00} \\ 
\Gamma^{-1}(1_{0l}) &= \frac{\zeta^l}{n(\zeta^l-1)}(x_0-1)\frac{x_0^n-1}{x_0-\zeta^l} \hspace{.2in}\textup{for}\,\,l\neq0 \label{eq.10l} \\ 
\Gamma^{-1}(1_{ml}) &= \frac{\zeta^l}{n}\frac{x_m^n-1}{x_m-\zeta^l} \hspace{.2in}\textup{for}\,\,m\neq0, \label{eq.1ml} 
\end{align} 
where all products above are with respect to the ordinary product on $K_G(I\pn)_\mathbb{C}$ with complex coefficients, and where we regard $\frac{u^n-1}{u-\zeta^l}=\prod_{i\neq l}(u-\zeta^i)$.
\end{prop}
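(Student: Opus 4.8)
The plan is to verify each of the four formulas directly by checking that the proposed preimage maps to the correct generator under the localization isomorphism $\Gamma$. Since $\Gamma$ is the direct sum of the localization maps $K_G(X^m)\to K_G(X^m)_{\mathfrak m_l}$, which on each factor $K_G(X^m)$ is simply the natural map to the localization at $\mathfrak m_l$, an element of $K_G(I\pn)_\nc$ is determined by its images in all the localizations. So it suffices to show, for each proposed $\Gamma^{-1}(1_{ml})$ (or $\Gamma^{-1}(x_{00})$), that it lies in the $m$-th factor $K_G(X^m)_\nc$, that its image in the localization $\mathcal K_l = K_G(IX)_{\mathfrak m_l}$ equals the named generator, and that its image in every other localization $\mathcal K_{l'}$, $l'\neq l$, vanishes. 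The vanishing in other localizations is the assertion that the given polynomial in $x_m$ lies in $\mathfrak m_{l'}\cdot K_G(X^m)$ for $l'\neq l$, i.e.\ vanishes at $x_m=\zeta^{l'}$; this is visibly arranged by the factor $\frac{x_m^n-1}{x_m-\zeta^l}=\prod_{i\neq l}(x_m-\zeta^i)$ (respectively the factor $(x_0-1)$ together with $\frac{x_0^n-1}{x_0-\zeta^l}$ in the $m=0$, $l\neq 0$ case), each root $\zeta^{l'}$ with $l'\neq l$ appearing among the $x_m-\zeta^i$.

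First I would dispose of the cases $m\neq 0$. Here $K_G(X^m)_\nc\cong \nc[x_m^{\pm1}]/\langle x_m^n-1\rangle\cong\prod_{l=0}^{n-1}\nc[x_m^{\pm1}]/\langle x_m-\zeta^l\rangle$, the CRT decomposition, and the idempotent projecting onto the $l$-th factor is exactly $\frac{1}{n}\,\zeta^l\,\prod_{i\neq l}(x_m-\zeta^i)$ up to the standard Lagrange-interpolation normalization: evaluating $\prod_{i\neq l}(u-\zeta^i)$ at $u=\zeta^l$ gives $\prod_{i\neq l}(\zeta^l-\zeta^i)$, and the identity $\prod_{i\neq l}(\zeta^l-\zeta^i)=n\zeta^{-l}$ (obtained by differentiating $u^n-1=\prod_i(u-\zeta^i)$ at $u=\zeta^l$) shows the normalization constant is $\frac{\zeta^l}{n}$, matching \eqref{eq.1ml}. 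This is a one-line computation once the interpolation identity is recorded.

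For $m=0$, the ring is $K_G(X^0)_\nc\cong\nc[x_0^{\pm1}]/\langle(x_0-1)(x_0^n-1)\rangle$, which by CRT is $\big(\nc[x_0]/\langle(x_0-1)^2\rangle\big)\times\prod_{l=1}^{n-1}\nc[x_0]/\langle x_0-\zeta^l\rangle$, since $(x_0-1)(x_0^n-1)=(x_0-1)^2\prod_{l\neq 0}(x_0-\zeta^l)$. For $l\neq 0$ the same Lagrange computation as above applies, except the polynomial must additionally carry the factor $(x_0-1)$ to vanish at the double point, and the normalization now involves evaluating $(x_0-1)\frac{x_0^n-1}{x_0-\zeta^l}$ at $x_0=\zeta^l$, giving the constant $\frac{\zeta^l}{n(\zeta^l-1)}$ of \eqref{eq.10l}. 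For $l=0$ the fat point $\nc[x_0]/\langle(x_0-1)^2\rangle$ has two generators, $1_{00}$ and $x_{00}$, and one must produce representatives in $K_G(X^0)_\nc$ whose images in every $\mathcal K_{l'}$, $l'\neq 0$, vanish (so they carry the factor $\prod_{l\neq 0}(x_0-\zeta^l)=\frac{x_0^n-1}{x_0-1}$) and which restrict modulo $(x_0-1)^2$ to $1$ and to $x_0$ respectively. Writing the representative as $(a x_0+b)\frac{x_0^n-1}{x_0-1}$ and imposing the value and derivative conditions at $x_0=1$ — using $\frac{x_0^n-1}{x_0-1}\big|_{x_0=1}=n$ and its derivative $\binom{n}{2}$ — gives a $2\times2$ linear system for $(a,b)$; solving it yields $\frac{1}{2n}\big((1-n)x_0+(1+n)\big)$ for $1_{00}$ and $\frac{1}{2n}\big((3-n)x_0+(n-1)\big)$ for $x_{00}$, matching \eqref{eq.100} and \eqref{eq.x00}.

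The main obstacle is precisely this last $l=0$ computation: unlike the reduced localizations, the length-two local ring forces one to track not just values but first-order jets at $x_0=1$, so one needs the Taylor expansion of $\frac{x_0^n-1}{x_0-1}$ to first order at $x_0=1$ and must solve the resulting linear system, and one must also double-check that $x_{00}$ as defined really is the image of $x_0$ itself (not some other lift) in the localization — i.e.\ that $\Gamma(x_0)$ has $l=0$ component equal to the class of $x_0$ in $\nc[x_0]/\langle(x_0-1)^2\rangle$, which pins down the normalization. Everything else is routine Chinese Remainder Theorem bookkeeping together with the cyclotomic interpolation identity $\prod_{i\neq l}(\zeta^l-\zeta^i)=n\zeta^{-l}$.
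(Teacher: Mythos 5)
Your proposal is correct and follows essentially the same route as the paper: determine each preimage by forcing it to vanish at every $\zeta^{l'}$ with $l'\neq l$ (hence to carry the factor $\prod_{i\neq l}(x_m-\zeta^i)$, plus $(x_0-1)$ in the fat-point cases), then fix the normalization by evaluating at $\zeta^l$ via $\prod_{i\neq l}(\zeta^l-\zeta^i)=n\zeta^{-l}$, and at the double point $x_0=1$ by solving the same $2\times 2$ system in $(a,b)$ from the value and first-derivative conditions. The CRT/Lagrange-idempotent packaging is only a cosmetic difference from the paper's congruence-and-limit bookkeeping.
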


\begin{proof} We now calculate the map $\Gamma^{-1}$ on the generators of the decomposition.  Consider the element $1_{00}$.  If $\Gamma(f)=1_{00}$, then $f(x_0)=p(x_0)(x_0-1)(x_0^n-1)+q(x_0)$, where $p$ and $q$ are polynomials, and the degree of $q$ is at most $n$.  The localization maps give that $f\equiv 0\bmod{(x_0-\zeta^l)}$, where $l\neq 0$.  Thus, $q(x_0)=(ax_0+b)\prod_{l\neq 0}(x_0-\zeta^l)=(ax_0+b)\frac{x_0^n-1}{x_0-1}$, where $a,b\in\mathbb{C}$.  We also have that $f\equiv 1\bmod{(x_0-1)^2}$, or equivalently, $f(1)=1$ and $f'(1)=0$.  Hence, we have 
\begin{align*} f(1) &= \lim_{x_0\to 1}(ax_0+b)\frac{x_0^n-1}{x_0-1} = n(a+b) \\ \frac{1}{n} &= a+b. & \end{align*}  
Similarly,
\begin{align*} f'(1) &= \lim_{x_0\to 1} \frac{(n-1)x_0^n-nx_0^{n-1}+1}{(x_0-1)^2}(ax_0+b)+a\frac{x_0^n-1}{x_0-1} \\ 0 &= \frac{n(n-1)}{2}(a+b)+na = \frac{n(n+1)}{2}a+\frac{n(n-1)}{2}b. \end{align*}
Combining these two expressions, we obtain $$a=\frac{1-n}{2n} \hspace{.2in} b=\frac{1+n}{2n}.$$
The inverse of $1_{00}$ can then be written as $$\Gamma^{-1}(1_{00})=\frac{1}{2n}((1-n)x_0+(1+n))\frac{x_0^n-1}{x_0-1}.$$

The computation for $x_{00}$ is similar to that for $1_{00}$.  The only difference is now we require that $f'(1)=1$ instead of 0.  This gives $$1=\frac{n(n+1)}{2}a+\frac{n(n-1)}{2}b.$$ Solving for $a$ and $b$ now gives $$a=\frac{3-n}{2n} \hspace{.2in} b=\frac{n-1}{2n}.$$
Therefore, we obtain an expression for the inverse of $x_{00}$ given by $$\Gamma^{-1}(x_{00})=\frac{1}{2n}((3-n)x_0+(n-1))\frac{x_0^n-1}{x_0-1}.$$ 

Now consider the element $1_{0l}$, where $l\neq 0$.  As before, we assume $1_{0l}=\Gamma(f(x_0))=p(x_0)(x_0-1)(x_0^n-1)+g(x_0)$ where the degree of $g$ is at most $n$.  As $1_{0l}\in\frac{\mathbb{C}[x_{0l}^{\pm1}]}{\langle x_{0l}-\zeta^l1_{0l}\rangle}$, we have $f\equiv 0\,mod(x-\zeta^k)$ for $k\neq l$.  This allows us to write $$g(x_0)=r(x_0)\prod_{k\neq l}(x_0-\zeta^k)=r(x_0)\frac{x_0^n-1}{x_0-\zeta^l},$$ for a polynomial $r$ of degree at most one.  Note that $f\equiv 0\bmod{(x_0-1)^2}$, so $r(x_0)=\alpha(x_0-1)$ for some constant $\alpha$.
We also have $f\equiv 1\bmod{(x-\zeta^l)}$.  So we have
$$f(\zeta^l) = \lim_{x_0\to\zeta^l}\alpha(x_0-1)\frac{x_0^n-1}{x_0-\zeta^l} = \lim_{x_0\to\zeta^l}\alpha(x_0-1)\frac{nx_0^{n-1}}{1} = \alpha(\zeta^l-1)n\zeta^{-l}.$$
And therefore, $$\alpha = \frac{\zeta^l}{n(\zeta^l-1)}.$$

We now have a description of the inverse of $1_{0l}$ $$\Gamma^{-1}(1_{0l})=\frac{\zeta^l}{n(\zeta^l-1)}(x_0-1)\frac{x_0^n-1}{x_0-\zeta^l}.$$

The last generators are $1_{ml}$ where $m$ is nonzero.  In this case, $1_{ml}=\Gamma(f(x_m))=p(x_m)(x_m-1)+q(x_m)$ where the degree of $q$ is at most $n-1$.  Since $1_{ml}\in\frac{\mathbb{C}[x_{ml}^{\pm1}]}{\langle x_{ml}-\zeta^l1_{ml}\rangle}$, we have $f\equiv 0\bmod{(x-\zeta^k)}$ for $k\neq l$.  Therefore, $$g(x_m)=\alpha\prod_{k\neq l}(x_m-\zeta^k)=\alpha\frac{x_m^n-1}{x_m-\zeta^l},$$ where $\alpha$ is a constant.  Furthermore, $f\equiv 1\bmod{(x-\zeta^l)}$, so we can solve for $\alpha$,
$$f(\zeta^l) = \lim_{x_m\to\zeta^l}\alpha\frac{x_m^n-1}{x_m-\zeta^l} = \lim_{x_m\to\zeta^l}\alpha\frac{nx_m^{n-1}}{1} = \alpha n\zeta^{-l}.$$
We then get $$\alpha = \frac{\zeta^l}{n}.$$
The inverse of $1_{ml}$ is given by $$\Gamma^{-1}(1_{ml})=\frac{\zeta^l}{n}\cdot\frac{x_m^n-1}{x_m-\zeta^l}.$$
\end{proof}

\section{The virtual product on the localization}
In the previous section, we constructed an inverse to the localization maps.  Using the vector space isomorphism $\Gamma$, and the virtual product on $\pn$ as given in equation \eqref{eq.virprodp1n}, a virtual product can be constructed on the localization $\mathcal{K}=\bigoplus_{l=0}^{n-1}\mathcal{K}_l$.

\begin{prop} The virtual product on $\mathcal{K}$ is given by linearly extending the virtual multiplications on the generators of $\mathcal{K}$ below.
\begin{align*} 1_{00}*1_{00} &= 1_{00} & 1_{00}*x_{00} &= x_{00} \\ 1_{00}*1_{m0} &= 1_{m0}, \hspace{.1in}\textup{if $m\neq 0$} & x_{00}*x_{00} &= 2x_{00}-1_{00} \\ x_{00}*1_{m0} &= 1_{m0}, \hspace{.1in} \textup{if $m\neq 0$} & 1_{m_10}*1_{m_20} &= 0 \\ 1_{0l}*1_{ml} &= 1_{ml} & x_{00}*1_{ml} &= 0, \hspace{.1in} \textup{if $l\neq 0$} \\ 1_{m_1l}*1_{m_2l} &= (1-\zeta^{-l})1_{(m_1+m_2)l}, \hspace{.1in} \textup{if $m_1+m_2\neq n,\,\, m_1,m_2\neq 0$} \\ 1_{m_1l}*1_{m_2l} &= (1-\zeta^{-l})^21_{0l}, \hspace{.1in} \textup{if $m_1+m_2=n,\,\, m_1,m_2\neq 0$} \\ 1_{m_1l_1}*1_{m_2l_2} &= 0, \hspace{.1in} \textup{if $l_1\neq l_2$} &  \end{align*}
\end{prop}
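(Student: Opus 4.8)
The plan is to compute the virtual product on $\mathcal{K}$ directly from its definition. By construction this product is the transport of the virtual product \eqref{eq.virprodp1n} under the $\mathbb{C}$-linear isomorphism $\Gamma$; that is, $a*b=\Gamma\big(\Gamma^{-1}(a)*\Gamma^{-1}(b)\big)$ for $a,b\in\mathcal{K}$, with $\Gamma^{-1}$ of the generators given by \eqref{eq.100}--\eqref{eq.1ml}. Commutativity, associativity and the unit are inherited from $K_G(I\pn)_\mathbb{C}$, and the product is bilinear, so it is enough to verify each of the displayed identities; read with $m$ ranging over $\mathbb{Z}/n\mathbb{Z}$ where applicable, these together with the catch-all vanishing for distinct localization indices exhaust all pairs of the vector-space generators $\{1_{ml}\}\cup\{x_{00}\}$. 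I would organize the verification according to which twisted sectors $X^0,X^1,\dots,X^{n-1}$ the two factors lie in, since this governs the Euler factor $\euler(\stack{S}_{m_1}+\stack{S}_{m_2}-\stack{S}_{m_1+m_2})$ appearing in \eqref{eq.virprodp1n}.

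For products internal to the untwisted sector (the classes $1_{00},x_{00}$ and the $1_{0l}$, all lying in $\Gamma(K_G(X^0))$) the Euler factor is $1$, so the virtual product restricted to $K_G(X^0)=\mathbb{C}[x_0^{\pm1}]/\langle(x_0-1)(x_0^n-1)\rangle$ coincides with the ordinary product, on which $\Gamma$ is a ring isomorphism; hence $1_{00}$ and the $1_{0l}$ are orthogonal idempotents, and writing $x_{00}=1_{00}+(x_{00}-1_{00})$ with $(x_{00}-1_{00})^2=0$ yields $x_{00}*x_{00}=2x_{00}-1_{00}$. For a product of an untwisted class with a class on $X^m$, $m\neq0$, the Euler factor is again $1$, so \eqref{eq.virprodp1n} gives $f*g=\overline{f}\cdot g$ in $K_G(X^m)$, where $\overline{f}$ is the image of $f$ under the natural surjection $K_G(X^0)\twoheadrightarrow K_G(X^m)=\mathbb{C}[x_m^{\pm1}]/\langle x_m^n-1\rangle$, $x_0\mapsto x_m$. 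One checks from \eqref{eq.100}, \eqref{eq.x00}, \eqref{eq.10l} that this surjection sends both $\Gamma^{-1}(1_{00})$ and $\Gamma^{-1}(x_{00})$ to the idempotent $\Gamma^{-1}(1_{m0})$ and sends $\Gamma^{-1}(1_{0l})$ to $\Gamma^{-1}(1_{ml})$ for $l\neq0$; since $\Gamma$ is a ring map on $K_G(X^m)$ and the $\Gamma^{-1}(1_{ml})$ are orthogonal idempotents, the identities $1_{00}*1_{m0}=1_{m0}$, $x_{00}*1_{m0}=1_{m0}$, $x_{00}*1_{ml}=0$ ($l\neq0$), $1_{0l}*1_{ml}=1_{ml}$ and the vanishing for distinct localization indices all follow.

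The remaining and most delicate case is a product of two twisted classes $1_{m_1l_1}*1_{m_2l_2}$ with $m_1,m_2\neq0$. Here I would expand each idempotent by the discrete Fourier transform $\Gamma^{-1}(1_{ml})=\tfrac1n\sum_{a=0}^{n-1}\zeta^{-la}x_m^a$, apply \eqref{eq.virprodp1n} term by term, pull the Euler class out of the resulting double sum, and recognize $\sum_{a,b}\zeta^{-l_1a-l_2b}x_{m_1+m_2}^{a+b}$ as $n^2$ times the ordinary product of the $\zeta^{l_1}$- and $\zeta^{l_2}$-idempotents of $K_G(X^{m_1+m_2})$ (indices modulo $n$), which is $\delta_{l_1l_2}\,n^2\,\Gamma^{-1}(1_{(m_1+m_2)l_1})$; after cancelling the $\tfrac1{n^2}$ the product becomes $\delta_{l_1l_2}\,\euler(\stack{S}_{m_1}+\stack{S}_{m_2}-\stack{S}_{m_1+m_2})\cdot\Gamma^{-1}(1_{(m_1+m_2)l_1})$, and evaluating $\euler=1-x^{-1}$ (respectively $(1-x^{-1})^2$ when $m_1+m_2=n$) against this idempotent produces the scalar $1-\zeta^{-l}$ (respectively $(1-\zeta^{-l})^2$), which vanishes at $l=0$. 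This yields $1_{m_10}*1_{m_20}=0$, $1_{m_1l}*1_{m_2l}=(1-\zeta^{-l})1_{(m_1+m_2)l}$ when $m_1+m_2\neq n$, and $1_{m_1l}*1_{m_2l}=(1-\zeta^{-l})^2 1_{0l}$ when $m_1+m_2=n$. The hard part is exactly this last subcase, where the product leaves the twisted sectors and re-enters $K_G(X^0)$: one has to keep the two quotient rings $K_G(X^m)$ and $K_G(X^0)$ distinct and, at $x_0=1$, track the $2$-jet coming from $(x_0-1)(x_0^n-1)=0$. What makes it go through is that the Euler factor there contributes $(x_0-1)^2$ up to a unit and $(x_0-1)^2$ annihilates the $(0,0)$-summand, so the product has no $1_{00}$ or $x_{00}$ component and is concentrated in the summands with $l\neq0$. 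Once this bookkeeping is settled the remaining steps are routine, and assembling the cases gives all the displayed formulas.
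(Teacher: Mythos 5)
Your proposal is correct, and it takes a genuinely different route from the paper. The paper verifies each identity by brute force: it expands $\Gamma^{-1}$ of each factor as an explicit degree-$\le n$ polynomial, multiplies term by term using \eqref{eq.virprodp1n}, reduces modulo $(x_0-1)(x_0^n-1)$ (using side identities like $x_0^{2n-k}=x_0^n+x_0^{n-k}-1$), and matches the result against the $\Gamma^{-1}$-image of the claimed right-hand side. You instead exploit three structural facts: that $\Gamma$ restricted to each summand $K_G(X^m)$ is a ring isomorphism for the ordinary product; that the Euler factor is $1$ whenever one factor is untwisted, so untwisted-untwisted products are governed by the orthogonal-idempotent decomposition of $K_G(X^0)$ (with the nilpotent $x_{00}-1_{00}$ in the double root at $1$), and untwisted-twisted products reduce to multiplication by the pullback along $x_0\mapsto x_m$, under which \eqref{eq.100}, \eqref{eq.x00}, \eqref{eq.10l} all land on the appropriate group-ring idempotents; and that for two genuinely twisted factors the $\Gamma^{-1}(1_{ml})$ are exactly the DFT idempotents $e_l$ of $\mathbb{C}[\mathbb{Z}/n\mathbb{Z}]$, so after pulling the Euler class out of the double sum the product is $\delta_{l_1l_2}\,\euler\cdot e_{l_1}$, with $\euler=1-x^{-1}$ (resp.\ $(1-x^{-1})^2$) contributing the scalar $1-\zeta^{-l}$ (resp.\ $(1-\zeta^{-l})^2$), which vanishes at $l=0$ as required. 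Your handling of the one genuinely delicate subcase $m_1+m_2=n$, where the product re-enters $K_G(X^0)$ rather than the group ring, is right and is the crux: $(1-x_0^{-1})^2$ equals a unit times $(x_0-1)^2$, which annihilates the two-dimensional $l=0$ factor $\mathbb{C}[x_0]/(x_0-1)^2$, so the product has no $1_{00}$ or $x_{00}$ component and can be read off from its image in the semisimple quotient $\mathbb{C}[x_0]/\langle x_0^n-1\rangle$ where the idempotent calculus applies verbatim. Each approach has merit: the paper's is completely elementary and self-contained, while yours is substantially shorter, explains \emph{why} the answers are uniform in $l$ (Euler scalar against an idempotent), and anticipates the semisimple presentation in terms of the $u_l^q$ that the paper introduces after this proposition.
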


\begin{proof}
We use the definition of virtual multiplication in equation \eqref{eq.virprodp1n} to multiply the inverse of elements in $\mathcal{K}$, and then apply $\Gamma$ to obtain a multiplication in $\mathcal{K}$.
\begin{align*} 
\Gamma^{-1}(1_{00})*\Gamma^{-1}(1_{00}) &= \frac{1}{4n^2}((1-n)x_0^n+2x_0^{n-1}+\cdots+2x_0+(1+n))*((1-n)x_0^n+2x_0^{n-1} \\ & \quad +\cdots+2x_0+(1+n)) \\
&= \frac{1}{4n^2}((1-2n+n^2)x_0^{2n}+\sum_{k=1}^{n-1}(4(1-n)+4(k-1))x_0^{2n-k} \\ & \quad +(2(1-n^2)+4(n-1))x_0^n+\sum_{k=1}^{n-1}(4(1+n)+4(n-k-1))x_0^{n-k}+(1+2n+n^2)) \\ 
&= \frac{1}{4n^2}((1-2n+n^2)x_0^{2n}+\sum_{k=1}^{n-1}(4k-4n)(x_0^n+x_0^{n-k}-1) \\ & \quad +(2(1-n^2)+4(n-1))x_0^n+\sum_{k=1}^{n-1}(8n-4k)x_0^{n-k}+(1+2n+n^2)) \\ 
 &= \frac{1}{4n^2}((-2n^2+2n)x_0^n+\sum_{k=1}^{n-1}4nx_0^{n-k}+(2n^2+2n)) \\
 &= \frac{1}{2n}((1-n)x_0^n+\sum_{k=1}^{n-1}2x_0^{n-k}+(1+n)) \\ 
 &= \Gamma^{-1}(1_{00}) .
 \end{align*}
Taking $\Gamma$ of both sides of the equation above gives the multiplication $$1_{00}*1_{00}=1_{00}.$$
\begin{align*}
\Gamma^{-1}(1_{00})*\Gamma^{-1}(x_{00}) &= \frac{1}{4n^2}((1-n)x_0^{n}+2x_0^{n-1}+\cdots+2x_0+(1+n))*((3-n)x_0^{n}+2x_0^{n-1}\\ & \quad +\cdots+2x_0+(n-1)) \\
&= \frac{1}{4n^2}((3-4n+n^2)x_0^{2n}+\sum_{k=1}^{n-1}(4-4n+4k)x_0^{2n-k}+(-2n^2+8n-2)x_0^n\\ & \quad +\sum_{k=1}^{n-1}(8n-4-4k)x_0^{n-k}+(n^2-1)) \\
&= \frac{1}{4n^2}((3-4n+n^2)x_0^{2n}+\sum_{k=1}^{n-1}(4-4n+4k)(x_0^n+x_0^{n-k}-1)+(-2n^2+8n-2)x_0^n\\ & \quad +\sum_{k=1}^{n-1}(8n-4-4k)x_0^{n-k}+(n^2-1)) \\
&= \frac{1}{4n^2}(((6-8n+2n^2)+\sum_{k=1}^{n-1}(4-4n-4k)+(-2n^2+8n-2))x_0^n\\ & \quad +\sum_{k=1}^{n-1}(4n)x_0^{n-k}+(-(3-4n+n^2)-\sum_{k=1}^{n-1}(4-4n-4k)+(n^2-1))) \\
&= \frac{1}{4n^2}((-2n^2+6n)x_0^n+\sum_{k=1}^{n-1}(4n)x_0^{n-k}+(2n^2-2n)) \\
&= \frac{1}{2n}((3-n)x_0^n+2x_0^{n-1}+\cdots+2x_0+(n-1)) \\
&= \Gamma^{-1}(x_{00}).
\end{align*}
This gives the multiplication $$1_{00}*x_{00}=x_{00}.$$
\begin{align*}
\Gamma^{-1}(1_{00})*\Gamma^{-1}(1_{m0}) &= \frac{1}{2n^2}((1-n)x_0^{n}+2x_0^{n-1}+\cdots+2x_0+(1+n))*(x_m^{n-1}+\cdots+x_m+1) \\ 
&= \frac{1}{2n^2}(\sum_{k=1}^n((1-n)+(2(k-1)))x_m^{2n-k}+\sum_{k=1}^n(2(n-k)+(1+n))x_m^{n-k}) \\
&= \frac{1}{2n^2}(\sum_{k=1}^n(2k-n-1)x_m^{n-k}+\sum_{k=1}^n(3n-2k+1)x_m^{n-k}) \\
&= \frac{1}{2n^2}(\sum_{k=1}^n2nx_m^{n-k}) \\
&= \frac{1}{n}(x_m^{n-1}+\cdots+x_m+1) \\
&= \Gamma^{-1}(1_{m0}).
\end{align*}
Thus, we have $$1_{00}*1_{m0}=1_{m0}.$$
\begin{align*}
\Gamma^{-1}(x_{00})*\Gamma^{-1}(x_{00}) &= \frac{1}{4n^2}((3-n)x_0^{n}+2x_0^{n-1}+\cdots+2x_0+(n-1))*((3-n)x_0^{n}+2x_0^{n-1}\\ & \quad +\cdots+2x_0+(n-1)) \\ 
&= \frac{1}{4n^2}((9-6n+n^2)x_0^{2n}+\sum_{k=1}^{n-1}(4(3-n)+4(k+1))x_0^{2n-k}\\ & \quad +(2(-n^2+4n-3)+4(n-1))x_0^n+\sum_{k=1}^{n-1}(4(n-1)+4(n-1-k))x_0^{n-k}\\ & \quad +(n^2-2n+1)) \\
&= \frac{1}{4n^2}((9-6n+n^2)(2x_0^n-1)+\sum_{k=1}^{n-1}(4(k+2)-4n)(x_0^n+x_0^{n-k}-1)\\ & \quad +(-2n^2+12n-10)x_0^n+\sum_{k=1}^{n-1}(8n-8-4k)x_0^{n-k}+(n^2-2n+1)) \\
&= \frac{1}{4n^2}((-2n^2+10n)x_0^n+\sum_{k=1}^{n-1}4nx_0^{n-k}+(2n^2-6n))\\
&= \frac{1}{2n}((5-n)x_0^n+2x_0^{n-1}+\cdots+2x_0+(n-3)) \\
&= 2\Gamma^{-1}(x_{00})-\Gamma^{-1}(1_{00}).
\end{align*}
Therefore, $$x_{00}*x_{00}=2x_{00}-1_{00}.$$ and more generally, $$x_{00}^k=kx_{00}-(k-1)1_{00}.$$
\begin{align*} 
\Gamma^{-1}(x_{00})*\Gamma^{-1}(1_{m0}) &= \frac{1}{2n^2}((3-n)x_0^{n}+2x_0^{n-1}+\cdots+2x_0+(n-1))*(x_m^{n-1}+\cdots+x_m+1) \\
&= \frac{1}{2n^2}(\sum_{k=1}^n((3-n)+2(k-1))x_m^{2n-k}+\sum_{k=1}^n(2(n-k)+(n-1))x_m^{n-k}) \\
&= \frac{1}{2n^2}(\sum_{k=1}^n(2k+1-n)x_m^{n-k}+\sum_{k=1}^n(3n-2k-1)x_m^{n-k}) \\
&= \frac{1}{2n^2}(\sum_{k=1}^n2nx_m^{n-k}) \\ 
&= \frac{1}{n}(x_m^{n-1}+\cdots+x_m+1) \\
&= \Gamma^{-1}(1_{m0}).
\end{align*}
This calculation shows that $$x_{00}*1_{m0}=1_{m0}.$$
Suppose that $m_1+m_2\neq n$.
\begin{align*} 
\Gamma^{-1}(1_{m_10})*\Gamma^{-1}(1_{m_20}) &= \frac{1}{n^2}(x_{m_1}^{n-1}+\cdots+x_{m_1}+1)*(x_{m_2}^{n-1}+\cdots+x_{m_2}+1) \\
&= \frac{1}{n^2}(\sum_{k=1}^nk\xmm^{2n-k-1}(1-\xmm^{n-1})+\sum_{k=1}^{n-1}(n-k)\xmm^{n-k-1}(1-\xmm^{n-1}) \\
&= \frac{1}{n^2}(\sum_{k=1}^nk(\xmm^{2n-k-1}-\xmm^{3n-k-2})+\sum_{k=1}^{n-1}(n-k)(\xmm^{n-k-1}-\xmm^{2n-k-2})) \\
&= \frac{1}{n^2}(\sum_{k=1}^nk(\xmm^{n-k-1}-\xmm^{n-k-2})+\sum_{k=1}^{n-1}(n-k)(\xmm^{n-k-1}-\xmm^{n-k-2})) \\
&= \frac{1}{n^2}(\sum_{k=1}^nn(\xmm^{n-k}-\xmm^{n-k-1})) \\
&= 0.
\end{align*}
This identity gives $$1_{m_10}*1_{m_20}=0, \hspace{.1in}\textup{if}\,\, m_1+m_2\neq n.$$  Now suppose that $m_1+m_2=n$, then
\begin{align*} 
\Gamma^{-1}(1_{m_10})*\Gamma^{-1}(1_{m_20}) &= \frac{1}{n^2}(x_{m_1}^{n-1}+\cdots+x_{m_1}+1)*(x_{m_2}^{n-1}+\cdots+x_{m_2}+1) \\
&= \frac{1}{n^2}(\sum_{k=1}^nkx_0^{2n-k-1}(x_0^{n-2}-2x_0^{n-1}+x_0^n)+\sum_{k=1}^{n-1}(n-k)x_0^{n-k-1}(x_0^{n-2}-2x_0^{n-1}+x_0^n)) \\
&= \frac{1}{n^2}(\sum_{k=1}^nk(x_0^{3n-k-3}-2x_0^{3n-k-2}+x_0^{3n-k-1})+\sum_{k=1}^{n-1}(n-k)(x_0^{2n-k-3}-2x_0^{2n-k-2}+x_0^{2n-k-1})) \\
&= \frac{1}{n^2}(\sum_{k=1}^nk(x_0^{n-k-3}-2x_0^{n-k-2}+x_0^{n-k-1})+\sum_{k=1}^{n-1}(n-k)(x_0^{n-k-3}-2x_0^{n-k-2}+x_0^{n-k-1})) \\
&= \frac{1}{n}(\sum_{k=1}^n(x_0^{n-k-3}-2x_0^{n-k-2}+x_0^{n-k-1})) \\ 
&= 0.
\end{align*}
Here, the fourth equality uses the relations $x_0^{2n-k}=x_0^n+x_0^{n-k}-1$ and $x_0^{3n-k}=2x_0^{n}+x_0^{n-k}-2$ for $0\le k\le n$.  The conclusion is that the elements $1_{m0}$ are nilpotent, i.e. $$1_{m_10}*1_{m_20}=0, \hspace{.1in}\textup{if}\,\, m_1+m_2=n.$$  This concludes the calculation of the virtual product on $\mathcal{K}_0$.

We will calculate the virtual product on $\mathcal{K}_l$ similarly.  Henceforth, assume that $l\neq 0$.
First, suppose that $m_1+m_2\neq n$.
\begin{align*}
\Gamma^{-1}(1_{m_1l})*\Gamma^{-1}(1_{m_2l}) &= \frac{1}{n^2}(\zeta^{-l(n-1)}x_{m_1}^{n-1}+\cdots+\zeta^{-l}x_{m_1}+1)*(\zeta^{-l(n-1)}x_{m_2}^{n-1}+\cdots+\zeta^{-l}x_{m_2}+1) \\
&= \frac{1}{n^2}(\sum_{k=1}^nk\zeta^{-l(2n-k-1)}\xmm^{2n-k-1}(1-\xmm^{n-1})+\sum_{k=1}^{n-1}(n-k)\zeta^{-l(n-k-1)}\xmm^{n-k-1}(1-\xmm^{n-1})) \\
&= \frac{1}{n^2}(\sum_{k=1}^nk\zeta^{-l(2n-k-1)}(\xmm^{2n-k-1}-\xmm^{3n-k-2})+\sum_{k=1}^{n-1}(n-k)\zeta^{-l(n-k-1)}(\xmm^{n-k-1}-\xmm^{2n-k-2})) \\
&= \frac{1}{n^2}(\sum_{k=1}^nk\zeta^{-l(n-k-1)}(\xmm^{n-k-1}-\xmm^{n-k-2})+\sum_{k=1}^{n-1}(n-k)(\zeta^{-l(n-k-1)}(\xmm^{n-k-1}-\xmm^{n-k-2})) \\
&= \frac{1}{n}\sum_{k=1}^n\zeta^{-l(n-k-1)}(\xmm^{n-k-1}-\xmm^{n-k-2}) \\
&= \frac{1}{n}\sum_{k=1}^n(\zeta^{-l(n-k-1)}-\zeta^{-l(n-k-2)})\xmm^{n-k-1} \\
&= \frac{1}{n}(1-\zeta^{-l})\sum_{k=1}^n\zeta^{-l(n-k-1)}\xmm^{n-k-1} \\
&= (1-\zeta^{-l})\Gamma^{-1}(1_{m_1+m_20}).
\end{align*}
Thus, we obtain $$1_{m_1l}*1_{m_2l}=(1-\zeta^{-l})1_{m_1+m_2l}.$$  Now suppose that $m_1+m_2=n$, then
\begin{align*}
\Gamma^{-1}(1_{m_1l})*\Gamma^{-1}(1_{m_2l}) &= \frac{1}{n^2}(\zeta^{-l(n-1)}x_{m_1}^{n-1}+\cdots+\zeta^{-l}x_{m_1}+1)*(\zeta^{-l(n-1)}x_{m_2}^{n-1}+\cdots+\zeta^{-l}x_{m_2}+1) \\
&= \frac{1}{n^2}(\sum_{k=1}^nk\zeta^{-l(2n-k-1)}x_0^{2n-k-1}(x_0^{n-2}-2x_0^{n-1}+x_0^n)+\\ & \quad +\sum_{k=1}^{n-1}(n-k)\zeta^{-l(n-k-1)}x_0^{n-k-1}(x_0^{n-2}-2x_0^{n-1}+x_0^n)) \\
&= \frac{1}{n^2}(\sum_{k=1}^nk\zeta^{-l(2n-k-1)}(x_0^{3n-k-3}-2x_0^{3n-k-2}+x_0^{3n-k-1})+\\ & \quad +\sum_{k=1}^{n-1}(n-k)\zeta^{-l(n-k-1)}(x_0^{2n-k-3}-2x_0^{2n-k-2}+x_0^{2n-k-1})) \\
&= \frac{1}{n^2}(\sum_{k=1}^nk\zeta^{-l(n-k-1)}(x_0^{n-k-3}-2x_0^{n-k-2}+x_0^{n-k-1})+\\ & \quad +\sum_{k=1}^{n-1}(n-k)\zeta^{-l(n-k-1)}(x_0^{n-k-3}-2x_0^{n-k-2}+x_0^{n-k-1})) \\
&= \frac{1}{n}\sum_{k=1}^n\zeta^{-l(n-k-1)}(x_0^{n-k-3}-2x_0^{n-k-2}+x_0^{n-k-1}) \\
&= \frac{1}{n}\sum_{k=1}^n(\zeta^{-l(n-k-3)}-2\zeta^{-l(n-k-2)}+\zeta^{-l(n-k-1)})x_0^{n-k-1} \\
&= \frac{1}{n}(1-\zeta^{-l})^2\sum_{k=1}^n\zeta^{-l(n-k-1)}x_0^{n-k-1} \\
&= (1-\zeta^{-l})^2\Gamma^{-1}(1_{0l}).
\end{align*}
Taking $\Gamma$ of both sides of the equation gives $$1_{m_1l}*1_{m_2l}=(1-\zeta^{-l})^21_{0l}.$$
Suppose that $m\neq 0$, then
\begin{align*}
\Gamma^{-1}(1_{0l})*\Gamma^{-1}(1_{ml}) &= \frac{1}{n(\zeta^l-1)}(\zeta^{-l(n-1)}x_0^n+(\zeta^{-l(n-2)}-\zeta^{-l(n-1)})x_0^{n-1}\\ & \quad +\cdots+(1-\zeta^{-l})-1)*(\zeta^{-l(n-1)}x_{m}^{n-1}+\cdots+\zeta^{-l}x_{m}+1) \\
&= \frac{1}{n^2(\zeta^l-1)}(\sum_{k=1}^n(k\zeta^{-l(n-k-1)}-(k-1)\zeta^{-l(n-k)})x_m^{2n-k}+\\ & \quad +\sum_{k=1}^n((n-k)\zeta^{-l(n-k-1)}-(n-k+1)\zeta^{-l(n-k)})x_m^{n-k}) \\
&= \frac{1}{n(\zeta^l-1)}\sum_{k=1}^n(\zeta^{-l(n-k-1)}-\zeta^{-l(n-k)})x_m^{n-k} \\
&= \frac{1}{n}\sum_{k=1}^n\zeta^{-l(n-k)}x_m^{n-k} \\
&= \Gamma^{-1}(1_{ml}).
\end{align*}
This calculation shows that $$1_{0l}*1_{ml}=1_{ml}.$$  Combining the previous three identities and using associativity of virtual multiplication, we also have $$1_{0l}*1_{0l}=1_{0l}.$$
Suppose $l_1\neq l_2$, let $\eta=l_1-l_2$, and without loss of generality, take $m_1=1=m_2$.  Consider the product
\begin{align*} \frac{x_{1}^n-1}{x_{1}-\zeta^{l_1}}*\frac{x_{2}^n-1}{x_{2}-\zeta^{l_2}} &= (\zeta^{-l_1(n-1)}x_{1}^{n-1}+\cdots+1)*(\zeta^{-l_2(n-1)}x_{2}^{n-1}+\cdots+1) \\ 
&= \sum_{k=2}^{n}\Big(\sum_{i=1}^{k-1}\zeta^{-l_1(n-i)-l_2(n-k+i)}\Big)x_2^{2n-k}(1-x_2^{n-1})\\ & \quad +\sum_{k=1}^{n}\Big(\sum_{i=0}^{n-k}\zeta^{-l_1(n-k-i)-l_2i}\Big)x_2^{n-k}(1-x_2^{n-1}) \\
&= \sum_{k=2}^{n}\Big(\sum_{i=1}^{k-1}\zeta^{-l_1(n-i)-l_2(n-k+i)}\Big)(x_2^{n-k}-x_2^{n-k-1})\\ & \quad +\sum_{k=1}^{n}\Big(\sum_{i=0}^{n-k}\zeta^{-l_1(n-k-i)-l_2i}\Big)(x_2^{n-k}-x_2^{n-k-1}) \\
&= \sum_{k=2}^{n}\Big(\sum_{i=1}^{k-1}\zeta^{i\eta+kl_2}\Big)(x_2^{n-k}-x_2^{n-k-1})\\ & \quad +\sum_{k=1}^{n}\Big(\sum_{i=k}^{n}\zeta^{i\eta+kl_2}\Big)(x_2^{n-k}-x_2^{n-k-1}) \\
&= 0,
\end{align*}
using the identity $1+\zeta^\eta+\cdots+\zeta^{(n-1)\eta}=0$, which holds since $\eta\neq 0$.  As all of the inverse images of the generators of $\mathcal{K}$ contain a factor of the form $\frac{x_{m}^n-1}{x_{m}-\zeta^{l}}$ for some $m$ and some $l$, we see that whenever $l_1\neq l_2$, $$1_{m_1l_1}*1_{m_2l_2}=0.$$
\end{proof}

\section{The virtual Adams operations on the localization}
The virtual K-theory ring has extra structure, given by the virtual Adams (or $\psi$-) operations.  As with the virtual product, we can use the isomorphism $\Gamma$ to induce virtual Adams operations on the direct sum decomposition.  Let $d=\gcd(k,n)$ for all integers $k\ge 1$.

\begin{prop} Given $k\ge 1$ and $l=0,\ldots,n-1$, let the $d$ solutions to the equivalence $ky\equiv l\bmod{n}$ be given by $y=s_i$, where $i=1,\ldots,d$, so that when $l=0$, $s_1=0$.  The virtual Adams operations $\widetilde{\psi}^k:\mathcal{K}\rightarrow\mathcal{K}$ are given by
\begin{align*} \widetilde{\psi}^k(1_{0l}) &= \sum_{i=1}^d1_{0s_i}, \hspace{.1in}\textup{if}\,\, d\,|\,l,\,l\neq0 & \widetilde{\psi}^k(1_{0l}) &= 0, \hspace{.1in}\textup{if}\,\,  d\nmid l,\,l\neq0 \\ \widetilde{\psi}^k(1_{ml}) &= \sum_{i=1}^d\frac{\zeta^{-l}-1}{\zeta^{-s_i}-1}1_{ms_i}, \hspace{.1in}\textup{if}\,\,  d\,|\,l,\,m\neq0\neq l & \widetilde{\psi}^k(1_{ml}) &= 0, \hspace{.1in}\textup{if}\,\,  d\nmid l,\,m\neq0\neq l \\ \widetilde{\psi}^k(1_{m0}) &= k1_{m0}, \hspace{.1in}\textup{if}\,\,  m\neq0 & \widetilde{\psi}^k(1_{00}) &= \sum_{i=1}^d1_{0s_i} \\ \widetilde{\psi}^k(x_{00}) &= kx_{00}-(k-1)1_{00}+\sum_{i=2}^d1_{0s_i}. & \end{align*}
\end{prop}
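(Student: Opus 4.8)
The plan is to compute $\widetilde{\psi}^k$ directly on the explicit preimages $\Gamma^{-1}(g)$ of the generators $g\in\{1_{00},\,x_{00},\,1_{0l}\ (l\neq0),\,1_{ml}\ (m\neq0)\}$ given in \eqref{eq.100}, \eqref{eq.x00}, \eqref{eq.10l}, \eqref{eq.1ml}, and then to re-express each answer in the generators via $\Gamma$; since $\Gamma$ is a vector-space isomorphism this pins down $\widetilde{\psi}^k$ on $\mathcal{K}$. The first point I would establish is that $\widetilde{\psi}^k$ respects the sector decomposition $K_G(I_GX)=\bigoplus_{m=0}^{n-1}K_G(X^m)$: in the defining formula $\widetilde{\psi}^k(\stack{F})=\psi^k(\stack{F})\cdot\theta^k(\stack{S}^*)$ of \eqref{eq.inpsi} the product is the ordinary one, and both $\psi^k$ and multiplication by $\theta^k(\stack{S}^*)$ act sectorwise. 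On $K_G(X^m)$ with $m\neq0$, where $\stack{S}$ restricts to $x_m$, the operation becomes $f(x_m)\mapsto f(x_m^k)\sum_{i=0}^{k-1}x_m^{-i}$, using $\psi^k(x_m)=x_m^k$; on the untwisted sector $K_G(X^0)=K_G(X)$ it is the ordinary Adams operation $f(x_0)\mapsto f(x_0^k)$, as recalled in the introduction (equivalently, $\stack{S}=\mathbf{N}$ restricts to the zero bundle over $X^0=X$, so $\theta^k(\stack{S}^*)$ is $1$ there).

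Next I would record how to read off coefficients. For $m\neq0$, $K_G(X^m)_{\nc}=\nc[x_m]/(x_m^n-1)$ is a product of copies of $\nc$, and rewriting \eqref{eq.1ml} gives $\Gamma^{-1}(1_{ml})=\frac1n\sum_{j=0}^{n-1}\zeta^{-lj}x_m^{j}$, which is exactly the idempotent with $x_m=\zeta^l$; hence any $F\in K_G(X^m)_{\nc}$ equals $\sum_{l=0}^{n-1}F(\zeta^l)\,1_{ml}$. For the untwisted sector, $\nc[x_0^{\pm1}]/((x_0-1)(x_0^n-1))$ decomposes as $\nc[x_{00}]/(x_{00}-1)^2$ times $n-1$ copies of $\nc$, the root $x_0=1$ being double and the others simple; from \eqref{eq.100}, \eqref{eq.x00}, \eqref{eq.10l} one checks that $\Gamma^{-1}(1_{00})$ and $\Gamma^{-1}(x_{00})$ have value $1$ at $x_0=1$ with derivatives $0$ and $1$, that each $\Gamma^{-1}(1_{0l})$ vanishes to order $2$ at $x_0=1$, and that each of these classes vanishes at every other $n$-th root of unity, so any $F\in K_G(X^0)_{\nc}$ equals
\[
\bigl(F(1)-F'(1)\bigr)\,1_{00}+F'(1)\,x_{00}+\sum_{l\neq0}F(\zeta^l)\,1_{0l}.
\]

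With these in hand the rest is bookkeeping. For $m\neq0$, $\widetilde{\psi}^k(\Gamma^{-1}(1_{ml}))=\frac1n\sum_{j=0}^{n-1}\sum_{i=0}^{k-1}\zeta^{-lj}x_m^{kj-i}$, whose value at $x_m=\zeta^{l'}$ factors as $\bigl(\frac1n\sum_j\zeta^{j(kl'-l)}\bigr)\bigl(\sum_{i=0}^{k-1}\zeta^{-l'i}\bigr)$; the first factor is $1$ when $kl'\equiv l\pmod n$ and $0$ otherwise, and at a solution $l'$ of $ky\equiv l\pmod n$ one has $\zeta^{kl'}=\zeta^{l}$, so the second factor equals $k$ if $l'=0$ (only possible when $l=0$) and $\frac{\zeta^{-l}-1}{\zeta^{-l'}-1}$ if $l'\neq0$ — in particular $0$ when $l=0$. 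Since $ky\equiv l\pmod n$ has the $d$ solutions $s_1,\dots,s_d$ when $d\mid l$ and none otherwise, summing $F(\zeta^{l'})\,1_{ml'}$ over all $l'$ yields the stated formulas for $\widetilde{\psi}^k(1_{ml})$ and $\widetilde{\psi}^k(1_{m0})$ as well as the vanishing when $d\nmid l$. The untwisted cases run identically: apply $x_0\mapsto x_0^k$ to \eqref{eq.100}, \eqref{eq.x00}, \eqref{eq.10l}, rewrite $\frac{x_0^{kn}-1}{x_0^k-\zeta^l}$ as a polynomial in $x_0^k$, and extract the value and first derivative at $x_0=1$ and the values at the other roots of unity; these last are again supported on the solutions of the relevant congruence and, using $\zeta^{kl'}=\zeta^{l}$ on those, evaluate to the constants appearing in the statement, producing $\widetilde{\psi}^k(1_{00})$, $\widetilde{\psi}^k(x_{00})$ and $\widetilde{\psi}^k(1_{0l})$.

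I expect the only real obstacle to be organizational: one must carefully handle the doubled point $x_0=1$ in the untwisted sector, where a value/derivative pair is needed and this is what produces the $1_{00}$-versus-$x_{00}$ split in $\widetilde{\psi}^k(x_{00})$, and one must be systematic about which residues $l'$ solve $ky\equiv l\pmod n$ and about the substitution $\zeta^{kl'}=\zeta^{l}$ there; there is no conceptual difficulty once the sectorwise description of $\widetilde{\psi}^k$ and the idempotent/derivative description of the $\Gamma^{-1}(g)$'s are set up. As a consistency check, the formulas must be compatible with $\widetilde{\psi}^k$ being a ring homomorphism for the virtual product (the augmented $\psi$-ring structure quoted earlier) together with the virtual multiplication table of the previous section.
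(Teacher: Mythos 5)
Your proposal is correct and follows essentially the same route as the paper: apply $\widetilde{\psi}^k$ sectorwise to the explicit preimages $\Gamma^{-1}(g)$ and identify the result by its values at the $n$-th roots of unity, together with the value and first derivative at $x_0=1$ in the untwisted sector. Your up-front coefficient-extraction formula $F\mapsto (F(1)-F'(1))\,1_{00}+F'(1)\,x_{00}+\sum_{l\neq0}F(\zeta^l)\,1_{0l}$ (and $F\mapsto\sum_{l}F(\zeta^l)\,1_{ml}$ on the twisted sectors) merely packages more cleanly what the paper does case by case by solving for the remainder polynomial $g$ and its leading coefficients, so the two arguments coincide in substance.
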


\begin{proof}
Suppose $l\neq 0$ and $m\neq 0$, and recall the expression for the inverse of $1_{ml}$ in equation \eqref{eq.1ml}.  Let
\begin{align*}
h(x_m) &= \tpsi^k(\Gamma^{-1}(1_{ml})) \\ 
&= \psi^k(\Gamma^{-1}(1_{ml}))\theta^k(x_m^{-1}) \\
&= \frac{\zeta^l}{n}\cdot\frac{x_m^{kn}-1}{x_m^k-\zeta^l}\cdot(1+x_m^{-1}+\cdots+x_m^{-k+1}) \\
&= f(x_m)(x_m^n-1)+g(x_m).
\end{align*}
where the degree of $g$ is at most $n-1$.  Suppose that $d$ does not divide $l$, then there are no solutions to the equivalence $ky\equiv l\bmod{n}$.  Therefore, $h(\zeta^y)=0$ for all $0\le y\le n-1$.  But the degree of $g$ is at most $n-1$, so we must have $g(x_m)=0$.  In this case, we have $$\tpsi^k(1_{ml})=0.$$
Now consider the case where $d$ divides $l$.  If $y$ does not satisfy the equivalence $ky\equiv l\bmod{n}$, then $h(\zeta^y)=0$, so we can write $$g(x_m)=p(x_m)\prod_{ky\not\equiv l\bmod{n}}(x_m-\zeta^y)$$  Let $s_i$, for $1\le i\le d$, be the solutions to the above equivalence, then 
\begin{align*}
h(\zeta^{s_i}) &= \lim_{x_m\to \zeta^{s_i}}\frac{\zeta^l}{n}\cdot\frac{x_m^{kn}-1}{x_m^k-\zeta^l}\cdot(1+x_m^{-1}+\cdots+x_m^{-k+1}) \\
&= \lim_{x_m\to \zeta^{s_i}}\frac{\zeta^l}{n}\cdot\frac{x_m^{kn}-1}{x_m^k-\zeta^l}\cdot\frac{x_m^{-k}-1}{x_m^{-1}-1} \\
&= \lim_{x_m\to \zeta^{s_i}}\frac{\zeta^l}{n}\cdot\frac{knx_m^{kn-1}}{kx_m^{k-1}}\cdot\frac{x_m^{-k}-1}{x_m^{-1}-1} \\
&= \frac{\zeta^l}{n}\cdot\frac{kn(\zeta^{s_i})^{kn-1}}{k(\zeta^{s_i})^{k-1}}\cdot\frac{(\zeta^{s_i})^{-k}-1}{(\zeta^{s_i})^{-1}-1} \\
&= \frac{\zeta^l}{n}\cdot\frac{kn\zeta^{-s_i}}{k\zeta^l\zeta^{-s_i}}\cdot\frac{\zeta^{-l}-1}{\zeta^{-s_i}-1} \\
&= \frac{\zeta^{-l}-1}{\zeta^{-s_i}-1}.
\end{align*}
On the other hand,
\begin{align*}
g(\zeta^{s_i}) &= \lim_{x_m\to \zeta^{s_i}} p(x_m)\prod_{ky\not\equiv l\bmod{n}}(x_m-\zeta^y) \\
&= \lim_{x_m\to \zeta^{s_i}} p(x_m)\frac{x_m^n-1}{(x_m-\zeta^{s_1})\cdots(x_m-\zeta^{s_d})} \\
&= \lim_{x_m\to \zeta^{s_i}} p(x_m)\frac{nx_m^{n-1}}{\sum_{j=1}^d\prod_{t\neq j}(x_m-\zeta^{s_t})} \\
&= p(\zeta^{s_i})\frac{n\zeta^{-s_i}}{\prod_{t\neq i}(\zeta^{s_i}-\zeta^{s_t})}.
\end{align*}
We can then solve for $p(\zeta^{s_i})$.
\begin{align*}
h(\zeta^{s_i}) &= g(\zeta^{s_i}) \\
\frac{\zeta^{-l}-1}{\zeta^{-s_i}-1} &= p(\zeta^{s_i})\frac{n\zeta^{-s_i}}{\prod_{t\neq i}(\zeta^{s_i}-\zeta^{s_t})} \\
p(\zeta^{s_i}) &= \frac{\zeta^{s_i}}{n}\prod_{t\neq i}(\zeta^{s_i}-\zeta^{s_t})\frac{\zeta^{-l}-1}{\zeta^{-s_i}-1}.
\end{align*}
Thus, we have an expression for the virtual Adams operations.
$$ \tpsi^k(1_{ml})=\sum_{i=1}^d\frac{\zeta^{-l}-1}{\zeta^{-s_i}-1}1_{ms_i}.$$

Suppose $l\neq0$, and recall equation \eqref{eq.10l} for $\Gamma^{-1}(1_{0l})$.  Set
\begin{align*}
h(x_0) &= \tpsi^k(\Gamma^{-1}(1_{0l})) \\ 
&= \psi^k(\Gamma^{-1}(1_{0l})) \\
&= \frac{\zeta^l}{n(\zeta^l-1)}\cdot(x_0^k-1)\frac{x_0^{kn}-1}{x_0^k-\zeta^l} \\
&= f(x_0)(x_0-1)(x_0^n-1)+g(x_0),
\end{align*}
where the degree of $g$ is at most $n$.  As before, if $d$ does not divide $l$, then $ky\equiv l\bmod{n}$ has no solutions.  Then every $\zeta^y$ is a root of $g$, and 1 is a double root.  This contradicts the degree condition of $g$, so we must have $g(x_0)=0$, and $$\tpsi^k(1_{0l})=0.$$
Assume that $d$ divides $l$, so that $ky\equiv l\bmod{n}$ has $d$ solutions, denoted $s_i$ as above.  If $y$ is not a solution, then $h(\zeta^y)=0$.  In particular, we have $h(1)=0=h'(1)$.  We can then write $$g(x_0)=p(x_0)(x_0-1)\prod_{ky\not\equiv l\bmod{n}}(x_0-\zeta^y).$$  Now assume that $s_i$ is a solution.
\begin{align*} 
h(\zeta^{s_i}) &= \lim_{x_0\to \zeta^{s_i}} \frac{\zeta^l}{n(\zeta^l-1)}\cdot(x_0^k-1)\frac{x_0^{kn}-1}{x_0^k-\zeta^l} \\
&=  \lim_{x_0\to \zeta^{s_i}}\frac{\zeta^l}{n(\zeta^l-1)}\cdot(x_0^k-1)\frac{knx_0^{kn-1}}{kx_0^{k-1}} \\
&= \frac{\zeta^l}{n(\zeta^l-1)}\cdot((\zeta^{s_i})^k-1)\frac{kn(\zeta^{s_i})^{kn-1}}{k(\zeta^{s_i})^{k-1}} \\
&= \frac{\zeta^l}{n(\zeta^l-1)}\cdot(\zeta^l-1)\frac{kn\zeta^{-s_i}}{k\zeta^l\zeta^{-s_i}} \\
&= 1.
\end{align*}
On the other hand,
\begin{align*}
g(\zeta^{s_i}) &= \lim_{x_0\to \zeta^{s_i}}p(x_0)(x_0-1)\prod_{ky\not\equiv l\bmod{n}}(x_0-\zeta^y) \\
&= \lim_{x_0\to \zeta^{s_i}}p(x_0)(x_0-1)\frac{x_0^n-1}{(x_0-\zeta^{s_1})\cdots(x_0-\zeta^{s_d})} \\
&= \lim_{x_0\to \zeta^{s_i}} p(x_0)(x_0-1)\frac{nx_0^{n-1}}{\sum_{j=1}^d\prod_{t\neq j}(x_0-\zeta^{s_t})} \\
&= p(\zeta^{s_i})(\zeta^{s_i}-1)\frac{n\zeta^{-s_i}}{\prod_{t\neq i}(\zeta^{s_i}-\zeta^{s_t})}.
\end{align*}
Since $$h(\zeta^{s_i})=g(\zeta^{s_i}),$$ we obtain $$1=p(\zeta^{s_i})(\zeta^{s_i}-1)\frac{n\zeta^{-s_i}}{\prod_{t\neq i}(\zeta^{s_i}-\zeta^{s_t})},$$ and thus, $$p(\zeta^{s_i}) = \frac{\zeta^{s_i}}{n(\zeta^{s_i}-1)}\prod_{t\neq i}(\zeta^{s_i}-\zeta^{s_t}).$$
We can then conclude that $$\tpsi^k(1_{0l})=\sum_{i=1}^d1_{0s_i}.$$

Equation \eqref{eq.1ml} gives $\Gamma^{-1}(1_{m0})$ for $m\neq 0$.  Define
\begin{align*}
h(x_m) &= \tpsi^k(\Gamma^{-1}(1_{m0})) \\
&= \psi^k(\Gamma^{-1}(1_{m0}))\theta^k(x_m^{-1}) \\
&= \frac{1}{n}\cdot\frac{x_m^{kn}-1}{x_m^{k}-1}\cdot\frac{x_m^{-k}-1}{x_m^{-1}-1} \\
&= f(x_m)(x_m^n-1)+g(x_m).
\end{align*}
for $g$ with degree at most $n-1$.  If $y$ is not a solution to $ky\equiv 0\bmod{n}$, then $h(\zeta^y)=0$ so that $$g(x_m)=p(x_m)\prod_{ky\not\equiv 0\bmod{n}}(x_0-\zeta^y).$$  Further, if $s_i\neq 0$ is a solution to the equivalence, then $((\zeta^y)^{-k}-1)/((\zeta^y)^{-1}-1)=0$, so we have $$g(x_m)=\tilde{p}(x_m)\prod_{y\neq 0}(x_0-\zeta^y)=\tilde{p}(x_m)\frac{x_m^n-1}{x_m-1},$$ for some polynomial $\tilde{p}(x_m)$.  When $y=0$,
\begin{align*}
h(1) &= \lim_{x_m\to 1}\frac{1}{n}\cdot\frac{x_m^{kn}-1}{x_m^{k}-1}\cdot\frac{x_m^{-k}-1}{x_m^{-1}-1} \\
&= \lim_{x_m\to 1}\frac{1}{n}\cdot\frac{knx_m^{kn-1}}{kx_m^{k-1}}\cdot\frac{-kx_m^{-k-1}}{-x_m^{-2}} \\
&= \frac{1}{n}\cdot\frac{kn}{k}\cdot\frac{-k}{-1} \\
&= k.
\end{align*}
Evaluating $g$ at 1 gives
\begin{align*}
g(1) &= \lim_{x_m\to 1}\tilde{p}(x_m)\frac{x_m^n-1}{x_m-1} \\
&= \lim_{x_m\to1}\tilde{p}(x_m)\frac{nx_m^{n-1}}{1} \\
&= \tilde{p}(1)n.
\end{align*}
Therefore, we have $g(x_m)=\frac{k}{n}\frac{x_m^n-1}{x_m-1}=1_{m0}$, and so $$\tpsi^k(1_{m0})=k1_{m0}.$$

The polynomial defining $\Gamma^{-1}(1_{00})$ is given by equation \eqref{eq.100}.  The Adams operations are 
\begin{align*}
h(x_0) &= \tpsi^k(\Gamma^{-1}(1_{00})) \\
&= \psi^k(\Gamma^{-1}(1_{00})) \\
&= \frac{1}{2n}((1-n)x_0^k+(1+n))\frac{x_0^{kn}-1}{x_0^k-1} \\
&= f(x_0)(x_0-1)(x_0^n-1)+g(x_0).
\end{align*} 
where $g$ is a polynomial of degree at most $n$.  As above, if $y$ is not a solution to $ky\equiv 0\bmod{n}$, then $h(\zeta^y)=0$, and we can write $$g(x_0)=p(x_0)\prod_{ky\not\equiv 0\bmod{n}}(x_0-\zeta^y)=p(x_0)\frac{x_0^n-1}{(x_0-\zeta^{s_1})\cdots(x_0-\zeta^{s_d})}.$$  Now suppose $s_i$ is a solution.
\begin{align*}
h(\zeta^{s_i}) &= \lim_{x_0\to\zeta^{s_i}} \frac{1}{2n}((1-n)x_0^k+(1+n))\frac{x_0^{kn}-1}{x_0^k-1} \\
&= \lim_{x_0\to\zeta^{s_i}} \frac{1}{2n}((1-n)x_0^k+(1+n))\frac{knx_0^{kn-1}}{kx_0^{k-1}} \\
&=  \frac{1}{2n}((1-n)(\zeta^{s_i})^k+(1+n))\frac{kn(\zeta^{s_i})^{kn-1}}{k(\zeta^{s_i})^{k-1}} \\
&= \frac{1}{2n}((1-n)+(1+n))\frac{kn\zeta^{-s_i}}{k\zeta^{-s_i}} \\
&= 1.
\end{align*}
Evaluating $g$ gives
\begin{align*}
g(\zeta^{s_i}) &= \lim_{x_0\to\zeta^{s_i}}p(x_0)\frac{x_0^n-1}{(x_0-\zeta^{s_1})\cdots(x_0-\zeta^{s_d})} \\
&= \lim_{x_0\to\zeta^{s_i}}p(x_0)\frac{nx_0^{n-1}}{\sum_{j=1}^d\prod_{t\neq j}(x_0-\zeta^{s_t})} \\
&= p(\zeta^{s_i})\frac{n\zeta^{-s_i}}{\prod_{t\neq i}(\zeta^{s_i}-\zeta^{s_t})}.
\end{align*}
Setting these expressions equal and solving for $p(\zeta^{s_i})$ yields $$p(\zeta^{s_i})=\frac{\zeta^{s_i}}{n}\prod_{t\neq i}(\zeta^{s_i}-\zeta^{s_t}).$$  Since it is possible for 1 to be a double root, we take derivatives and evaluate at 1.
\begin{align*}
h'(x_0) &= \frac{1}{2n}(k(1-n)x_0^{k-1})\frac{x_0^{kn}-1}{x_0^k-1}\\ & \quad +\frac{1}{2n}((1-n)x_0^k+(1+n))\frac{(x_0^k-1)knx_0^{kn-1}-(x_0^{kn}-1)kx_0^{k-1}}{(x_0^k-1)^2} \\
h'(1) &= \frac{1}{2n}k(1-n)\frac{kn}{k}+\frac{1}{2n}(2)\frac{kn(n-1)}{2} \\
&= 0.
\end{align*}
Recall that $s_1=0$ in this case.
\begin{align*}
g'(x_0) &= p'(x_0)\frac{x_0^n-1}{\prod_{t=1}^d(x_0-\zeta^{s_t})}\\ & \quad +p(x_0)\frac{nx_0^{n-1}\prod_{t=1}^d(x_0-\zeta^{s_t})-(x_0^n-1)\sum_{j=1}^d\prod_{t\neq j}(x_0-\zeta^{s_t})}{\prod_{t=1}^d(x_0-\zeta^{s_t})^2} \\
g'(1) &= \lim_{x_0\to1}\Big[p'(x_0)\frac{x_0^n-1}{\prod_{t=1}^d(x_0-\zeta^{s_t})}\\ & \quad +p(x_0)\frac{nx_0^{n-1}\prod_{t=1}^d(x_0-\zeta^{s_t})-(x_0^n-1)\sum_{j=1}^d\prod_{t\neq j}(x_0-\zeta^{s_t})}{\prod_{t=1}^d(x_0-\zeta^{s_t})^2}\Big] \\
&= p'(1)\frac{n}{\prod_{t=2}^d(1-\zeta^{s_t})}\\ & \quad +\frac{1}{n}\Big[\prod_{t=2}^d(1-\zeta^{s_t})\Big]\frac{n(n-1)\prod_{t=2}^d(1-\zeta^{s_t})-2n\sum_{j=2}^d\prod_{t\neq j\,\textup{or}\,1}(1-\zeta^{s_t})}{2\prod_{t=2}^d(1-\zeta^{s_t})^2}.
\end{align*}
So we have
\begin{align*} 0 &= p'(1)\frac{n}{\prod_{t=2}^d(1-\zeta^{s_t})}\\ & \quad +\frac{1}{n}\Big[\prod_{t=2}^d(1-\zeta^{s_t})\Big]\frac{n(n-1)\prod_{t=2}^d(1-\zeta^{s_t})-2n\sum_{j=2}^d\prod_{t\neq j\,\textup{or}\,1}(1-\zeta^{s_t})}{2\prod_{t=2}^d(1-\zeta^{s_t})^2} \\
0 &= 2np'(1)+(n-1)\prod_{t=2}^d(1-\zeta^{s_t})-2\sum_{j=2}^d\prod_{t\neq j\,\textup{or}\,1}(1-\zeta^{s_t}) \\
p'(1) &= \frac{1}{2n}\Big(2\sum_{j=2}^d\prod_{t\neq j\,\textup{or}\,1}(1-\zeta^{s_t})-(n-1)\prod_{t=2}^d(1-\zeta^{s_t})\Big).
\end{align*}
Consider the sum
\begin{align*}
r(x_0) &= \sum_{i=1}^d\Gamma^{-1}(1_{0s_i}) \\
&= \Gamma^{-1}(1_{00})+\sum_{i=2}^d\Gamma^{-1}(1_{0s_i}) \\
&= \frac{1}{2n}((1-n)x_0+(1+n))\frac{x_0^n-1}{x_0-1}+\sum_{i=2}^d\frac{\zeta^{s_i}}{n(\zeta^{s_i}-1)}(x_0-1)\frac{x_0^n-1}{x_0-\zeta^{s_i}}.
\end{align*}
Notice that $r(\zeta^y)$ agrees with the expression above for all values of $y$ and has $r'(1)=h'(1)$, so by taking $\Gamma$ of each side, we obtain $$\tpsi^k(1_{00})=\sum_{i=1}^d1_{0s_i}.$$

Lastly, we consider the Adams operations on $x_{00}$.  Recall that $\Gamma^{-1}(x_{00})$ is given in equation \eqref{eq.x00}.  Let
\begin{align*}
h(x_0) &= \tpsi^k(\Gamma^{-1}(x_{00})) \\
&= \psi^k(\Gamma^{-1}(x_{00})) \\
&= \frac{1}{2n}((3-n)x_0^k+(n-1))\frac{x_0^{kn}-1}{x_0^k-1} \\
&= f(x_0)(x_0-1)(x_0^n-1)+g(x_0).
\end{align*}
where the degree of $g$ is at most $n$.  When $y$ is not a solution to $ky\equiv 0\bmod{n}$, then $h(\zeta^y)=0$, so that $$g(x_0)=p(x_0)\prod_{ky\not\equiv 0\bmod{n}}(x_0-\zeta^y)=p(x_0)\frac{x_0^n-1}{\prod_{t=1}^d(x_0-\zeta^{s_t})}.$$  Assume $s_i$ is a solution to the equivalence $ky\equiv 0\bmod{n}$.
\begin{align*}
h(\zeta^{s_i}) &= \lim_{x_0\to\zeta^{s_i}} \frac{1}{2n}((1-n)x_0^k+(1+n))\frac{x_0^{kn}-1}{x_0^k-1} \\
&= \lim_{x_0\to\zeta^{s_i}} \frac{1}{2n}((3-n)x_0^k+(n-1))\frac{knx_0^{kn-1}}{kx_0^{k-1}} \\
&=  \frac{1}{2n}((3-n)(\zeta^{s_i})^k+(n-1))\frac{kn(\zeta^{s_i})^{kn-1}}{k(\zeta^{s_i})^{k-1}} \\
&= \frac{1}{2n}((3-n)+(n-1))\frac{kn\zeta^{-s_i}}{k\zeta^{-s_i}} \\
&= 1 \\
g(\zeta^{s_i}) &= \lim_{x_0\to\zeta^{s_i}}p(x_0)\frac{x_0^n-1}{\prod_{t=1}^d(x_0-\zeta^{s_t})} \\
&= \lim_{x_0\to\zeta^{s_i}}p(x_0)\frac{nx_0^{n-1}}{\sum_{j=1}^d\prod_{t\neq j}(x_0-\zeta^{s_t})} \\
&= p(\zeta^{s_i})\frac{n\zeta^{-s_i}}{\prod_{t\neq i}(\zeta^{s_i}-\zeta^{s_t})}.
\end{align*}
Equating these two expressions and solving for $p(\zeta^{s_i})$ gives $$p(\zeta^{s_i})=\frac{\zeta^{s_i}}{n}\prod_{t\neq i}(\zeta^{s_i}-\zeta^{s_t}).$$  As in the case of $1_{00}$, 1 may be a double root, so we take derivatives and evaluate to obtain
\begin{align*}
h'(x_0) &= \frac{1}{2n}(k(3-n)x_0^{k-1})\frac{x_0^{kn}-1}{x_0^k-1}\\ & \quad +\frac{1}{2n}((3-n)x_0^k+(n-1))\frac{(x_0^k-1)knx_0^{kn-1}-(x_0^{kn}-1)kx_0^{k-1}}{(x_0^k-1)^2} \\
h'(1) &= \frac{1}{2n}k(3-n)\frac{kn}{k}+\frac{1}{2n}(2)\frac{kn(n-1)}{2} \\
&= k.
\end{align*}
The same calculation as for $1_{00}$ holds for $g'(1)$, so we find that $$p'(1)=\frac{1}{2n}\Big(2k+2\sum_{j=2}^d\prod_{t\neq j\,\textup{or}\,1}(1-\zeta^{s_t})-(n-1)\prod_{t=2}^d(1-\zeta^{s_t})\Big).$$
Consider the expression
\begin{align*}
r(x_0) &= k\Gamma^{-1}(x_{00})-(k-1)\Gamma^{-1}(1_{00})+\sum_{i=2}^d\Gamma^{-1}(1_{0s_i}) \\
&= k\frac{1}{2n}((3-n)x_0+(n-1))\frac{x_0^n-1}{x_0-1}+(k-1)\frac{1}{2n}((1-n)x_0+(1+n))\frac{x_0^n-1}{x_0-1}\\ & \quad +\sum_{i=2}^d\frac{\zeta^{s_i}}{n(\zeta^{s_i}-1)}(x_0-1)\frac{x_0^n-1}{x_0-\zeta^{s_i}}.
\end{align*}
Since $r(\zeta^y)=h(\zeta^y)$ for all $y$, and $r'(1)=h'(1)$, we must have $r(x_0)=h(x_0)$.  Taking $\Gamma$ of both sides gives $$\tpsi^k(x_{00})=kx_{00}-(k-1)1_{00}+\sum_{i=2}^d1_{0s_i}.$$
\end{proof}

It will be useful to renormalize the generators of $K_l$ for $l\neq 0$ and $m\neq 0$ to make the virtual multiplication and virtual Adams operations more concise.  
\begin{df} Let $$\hat{1}_{ml}:=\frac{1}{1-\zeta^{-l}}1_{ml}$$ for $l\neq 0$ and $m\neq 0$.  When $l=0$, let $\hat{1}_{m0}=1_{m0}$ for all $m$.  Similarly, let $\hat{1}_{0l}=1_{0l}$ for all $l$.
\end{df}
With these generators, it is immediate that whenever $l\neq 0$, the localization $\mathcal{K}_l$ is isomorphic as a ring to the group ring $\mathbb{C}[\mathbb{Z}/n\mathbb{Z}]$.  Since the group ring has a semisimple basis, we can construct a semsimple basis for $\mathcal{K}_l$ as follows.
\begin{df} For $l\neq 0$, set $$u_l^q=\frac{1}{n}\sum_{i=0}^{n-1}\zeta^{-iq}\hat{1}_{il}$$ for $q=0,\ldots,n-1$.  Define $u_0^0=x_{00}-1_{00}$ and $u_0^m=1_{m0}$ for $m\neq 0$.
\end{df}
With these generators, the virtual multiplication becomes $$u_{l_1}^{q_1}*u_{l_2}^{q_2}=\delta_{l_1l_2}\delta_{q_1q_2}u_{l_1}^{q_1},$$ where $\delta$ is the Kronecker delta.
Note that the identity element $\mathbf{1}\in\mathcal{K}$ is $$\mathbf{1}=\sum_{l=0}^{n-1}1_{0l}.$$  Given these generators with the virtual multiplication, $K(\pn)_{\mathbb{C}}$ has the following presentation: $$\mathbb{C}[u_l^q,1_{0l}]_{l,q=0}^{n-1}/I.$$
where the ideal $I$ is generated by the polynomials below.
\begin{align*} u_0^qu_0^{q'} & & u_l^qu_{l'}^{q'}-\delta_{ll'}\delta_{qq'}u_l^q, & \hspace{.1in} \textup{if $l$ and $l'$ are not both 0} \\ \sum_{l=0}^{n-1}1_{0l}-\mathbf{1} & & \sum_{q=0}^{n-1}u_l^q-1_{0l}, & \hspace{.1in} \textup{if}\,\,\,l\neq 0 \\ 1_{0l}1_{0l'}-\delta_{ll'}1_{0l}, & \hspace{.1in}\textup{for $l=0,\ldots,n-1$} & u_l^q1_{0l'}-\delta_{ll'}u_l^q, & \hspace{.1in}\textup{for $l=0,\ldots,n-1$}.
\end{align*} 

Proposition 6.1 can be restated in terms of these new generators.
\begin{prop}
With respect to the generators $u_l^q$ on the localization, the virtual Adams operations have the following forms for all $k\ge 1$.
\begin{align*} \widetilde{\psi}^k(u_0^q) &= ku_0^q, \hspace{.1in} \textup{if}\,\,q=0,\ldots, n-1 \\ \widetilde{\psi}^k(u_l^q) &= \begin{cases} \sum_{i=1}^du_{s_i}^q, \hspace{.1in} \textup{for all}\,\,q=0,\ldots,n-1\,\,\textup{if}\,\, d\,|\,l\,\,\textup{and}\,\,l\neq 0 \\ 
0, \hspace{.1in} \textup{if}\,\,d\nmid l \end{cases} \\
\tpsi^k(\mathbf{1}) &= \mathbf{1}.
\end{align*}
\end{prop}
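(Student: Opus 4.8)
The plan is to derive the formulas for $\tpsi^k(u_l^q)$ and $\tpsi^k(\mathbf{1})$ directly from the preceding proposition by linearity, using that $\tpsi^k$ is a unital ring homomorphism which is additive and $\mathbb{C}$-linear on the polynomial representatives employed above. I would treat the three families of generators in turn.

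First I would handle the generators supported over $l=0$. Writing $u_0^0=x_{00}-1_{00}$ and invoking $\tpsi^k(x_{00})=kx_{00}-(k-1)1_{00}+\sum_{i=2}^d 1_{0s_i}$ together with $\tpsi^k(1_{00})=\sum_{i=1}^d 1_{0s_i}$ and the convention $s_1=0$ (so that $\sum_{i=1}^d 1_{0s_i}=1_{00}+\sum_{i=2}^d 1_{0s_i}$), the tails $\sum_{i=2}^d 1_{0s_i}$ cancel upon subtraction and one is left with $\tpsi^k(u_0^0)=kx_{00}-k1_{00}=ku_0^0$. For $m\neq 0$ one has $u_0^m=1_{m0}$ and $\tpsi^k(1_{m0})=k1_{m0}=ku_0^m$ immediately. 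This gives the first displayed line.

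Next I would carry out the main computation, the case $l\neq 0$. Unwinding the definition of the renormalized classes gives $u_l^q=\tfrac{1}{n}\bigl(1_{0l}+\tfrac{1}{1-\zeta^{-l}}\sum_{i=1}^{n-1}\zeta^{-iq}1_{il}\bigr)$. If $d\nmid l$, then every class $1_{0l}$ and $1_{il}$ (with $i\neq 0$) appearing here is annihilated by $\tpsi^k$, hence $\tpsi^k(u_l^q)=0$. If $d\mid l$, I would substitute $\tpsi^k(1_{0l})=\sum_{j=1}^d 1_{0s_j}$ and $\tpsi^k(1_{il})=\sum_{j=1}^d\tfrac{\zeta^{-l}-1}{\zeta^{-s_j}-1}1_{is_j}$, interchange the two finite sums, and use the identity $\tfrac{1}{1-\zeta^{-l}}\cdot\tfrac{\zeta^{-l}-1}{\zeta^{-s_j}-1}=\tfrac{1}{1-\zeta^{-s_j}}$. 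Regrouping by $j$ then reassembles $\tfrac{1}{n}\sum_{i=0}^{n-1}\zeta^{-iq}\hat{1}_{is_j}=u_{s_j}^q$, giving $\tpsi^k(u_l^q)=\sum_{j=1}^d u_{s_j}^q$ as claimed. The only subtlety here — and the closest thing to an obstacle — is that one must check $s_j\neq 0$ for every $j$ whenever $l\neq 0$; this is immediate since $ks_j\equiv l\not\equiv 0\bmod n$, and it is precisely what makes the renormalization $\hat{1}_{is_j}=\tfrac{1}{1-\zeta^{-s_j}}1_{is_j}$ legitimate (no division by zero is introduced). The dual bookkeeping point is to keep the $s_1=0$ convention straight in the $u_0^0$ computation.

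Finally, for the identity: $\mathbf{1}=\sum_{l=0}^{n-1}1_{0l}$ is the unit of the virtual product, so $\tpsi^k(\mathbf{1})=\mathbf{1}$ simply because $\tpsi^k$ is a unital ring homomorphism; alternatively one may sum the formula for $\tpsi^k(1_{0l})$ over all $l$ and observe that the solution sets of $ky\equiv l\bmod n$, as $l$ ranges over the multiples of $d$ in $\{0,\dots,n-1\}$, partition $\{0,\dots,n-1\}$. This step is routine. In summary, the proposition is a repackaging of the preceding one, and the entire content of the proof lies in the two small bookkeeping checks noted above.
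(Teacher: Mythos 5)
Your proof is correct, and it is exactly the direct substitution the paper has in mind: the paper states this proposition without proof as an immediate restatement of the preceding one, and your computation (including the cancellation of the $\sum_{i=2}^d 1_{0s_i}$ tails for $u_0^0$, the identity $\frac{1}{1-\zeta^{-l}}\cdot\frac{\zeta^{-l}-1}{\zeta^{-s_j}-1}=\frac{1}{1-\zeta^{-s_j}}$, and the check that $s_j\neq 0$ when $l\neq 0$) supplies precisely the omitted details.
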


\section{The virtual line elements}\label{sec:virlineelts}
Consider the augmented virtual K-ring $(K(I\pn)_{\mathbb{C}},*,\mathbf{1},\widetilde{\psi}^k,\widetilde{\epsilon})$ with $\mathbb{C}$-linear extensions of $\widetilde{\psi}^k$ and $\widetilde{\epsilon}$.
\begin{prop} The group of virtual line elements, $\mathcal{P}$, is isomorphic to $(\mathbb{Z}/n\mathbb{Z})^n\times\mathbb{C}^n$ via the isomorphism $$\Phi:(\mathbb{Z}/n\mathbb{Z})^n\times\mathbb{C}^n\longrightarrow\mathcal{P}$$ $$(f_0,\ldots,f_{n-1};\beta^0_0,\ldots,\beta_0^{n-1})\mapsto \mathbf{1}+\sum_{q=0}^{n-1}\sum_{l=1}^{n-1}(\zeta^{lf_q}-1)u_l^q+\sum_{q=0}^{n-1}\beta_0^qu_0^q.$$
\end{prop}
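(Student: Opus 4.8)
The plan is to work entirely on the localization $\mathcal{K}=\mathcal{K}_0\oplus\bigoplus_{l=1}^{n-1}\mathcal{K}_l$, using the explicit descriptions of the virtual product and of the virtual Adams operations in the basis $\{1_{00}\}\cup\{u_l^q\}$ established above. Recall that for $l\neq 0$ the block $\mathcal{K}_l\cong\mathbb{C}[\mathbb{Z}/n\mathbb{Z}]$ has the $u_l^q$ as orthogonal idempotents, $u_l^q*u_l^{q'}=\delta_{qq'}u_l^q$, whereas $\mathcal{K}_0=\mathbb{C}\cdot 1_{00}\oplus V$ with $V=\bigoplus_q\mathbb{C}\,u_0^q$ a square-zero ideal and $1_{00}$ the unit (immediate from $x_{00}*x_{00}=2x_{00}-1_{00}$ and $1_{m0}*1_{m'0}=0$). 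I would write a general element as $\cl=a\,1_{00}+\sum_q\beta_0^q u_0^q+\sum_{l=1}^{n-1}\sum_q\gamma_{l,q}u_l^q$ with complex coefficients; such a $\cl$ is $*$-invertible iff $a\neq 0$ and all $\gamma_{l,q}\neq 0$.

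First I would expand both sides of $\tpsi^k(\cl)=\cl^{*k}$. By the binomial theorem in $\mathcal{K}_0$ (where $V^{*2}=0$) and orthogonality of idempotents in the other blocks, $\cl^{*k}=a^k 1_{00}+ka^{k-1}\sum_q\beta_0^q u_0^q+\sum_{l=1}^{n-1}\sum_q\gamma_{l,q}^k u_l^q$. For the left-hand side I would substitute $\tpsi^k(u_0^q)=k u_0^q$, $\tpsi^k(1_{00})=1_{00}+\sum_{i\ge 2}1_{0s_i}$, and $\tpsi^k(u_l^q)=\sum_i u_{s_i(l)}^q$ (which is $0$ unless $\gcd(k,n)\mid l$), where $s_i(l)$ ranges over the solutions of $ky\equiv l\bmod n$, and then use $1_{0s}=\sum_q u_s^q$ for $s\neq 0$.

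Next I would match coefficients. The coefficient of $1_{00}$ gives $a^k=a$ for all $k$, hence $a=1$ since $a\neq 0$; the coefficient of $u_0^q$ gives the tautology $k\beta_0^q=k\beta_0^q$, so each $\beta_0^q\in\mathbb{C}$ is free. The crucial equations come from the coefficient of $u_{l'}^q$ with $l'\neq 0$: since $l'=s_i(l)$ for some $i$ exactly when $kl'\equiv l\bmod n$, the $u_{l'}^q$-component of $\tpsi^k(\cl)$ receives a contribution only from the single block $l=kl'\bmod n$, together with the constant $a=1$ from $\tpsi^k(1_{00})$ in the case $kl'\equiv 0\bmod n$. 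This yields $\gamma_{l',q}^k=\gamma_{kl'\bmod n,q}$ when $kl'\not\equiv 0\bmod n$ and $\gamma_{l',q}^k=1$ when $kl'\equiv 0\bmod n$; setting $\gamma_{0,q}:=1$ these merge into the single system $\gamma_{l,q}^k=\gamma_{kl\bmod n,q}$ for all $k\ge 1$ and $l\in\mathbb{Z}/n\mathbb{Z}$, one copy for each $q$.

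Finally I would solve this system and assemble $\Phi$. Taking $k=n$ forces $\gamma_{l,q}^n=1$, so $\gamma_{1,q}=\zeta^{f_q}$ for a unique $f_q\in\mathbb{Z}/n\mathbb{Z}$; taking $k=l$ then gives $\gamma_{l,q}=\gamma_{1,q}^{\,l}=\zeta^{lf_q}$ for every $l$, and conversely $\gamma_{l,q}=\zeta^{lf_q}$ clearly satisfies every relation. Hence $\cl$ is a virtual line element iff $\cl=1_{00}+\sum_q\beta_0^q u_0^q+\sum_{l\ge1}\sum_q\zeta^{lf_q}u_l^q=\mathbf{1}+\sum_q\sum_{l=1}^{n-1}(\zeta^{lf_q}-1)u_l^q+\sum_q\beta_0^q u_0^q$, i.e.\ precisely when $\cl=\Phi(f_0,\dots,f_{n-1};\beta_0^0,\dots,\beta_0^{n-1})$; this gives surjectivity of $\Phi$ onto $\mathcal{P}$, and injectivity is clear ($f_q$ is read off from the $u_1^q$-coefficient $\zeta^{f_q}$ and $\beta_0^q$ from the $u_0^q$-coefficient). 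A short computation using $u_l^{q}*u_l^{q'}=\delta_{qq'}u_l^q$ ($l\neq 0$), $u_0^q*u_0^{q'}=0$, and the fact that $\mathbf{1}$ restricts to the unit of each block shows $\Phi(f;\beta)*\Phi(f';\beta')=\Phi(f+f';\beta+\beta')$, so $\Phi$ is a group isomorphism for componentwise addition on $(\mathbb{Z}/n\mathbb{Z})^n\times\mathbb{C}^n$. I expect the main obstacle to be the coefficient-matching step: correctly tracking how $\tpsi^k(1_{00})$ and $\tpsi^k(u_l^q)$ redistribute weight among the localization blocks indexed by the solution sets of $ky\equiv l\bmod n$, and recognizing that this redistribution is governed by multiplication by $k$ on $\mathbb{Z}/n\mathbb{Z}$ — precisely the structure that makes the recursion $\gamma_{l,q}^k=\gamma_{kl\bmod n,q}$ solvable.
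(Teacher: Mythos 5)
Your proof is correct and takes essentially the same route as the paper: expand a general element in the $\{1_{00},u_0^q,u_l^q\}$ basis of the localization, match coefficients of $\tpsi^k(\cl)$ against $\cl^{*k}$ block by block, and solve the resulting recursion. Your packaging is slightly cleaner than the paper's — you read invertibility off the block structure directly (rather than dispatching the $\alpha=0$ case by a separate contradiction) and the normalization $\gamma_{0,q}:=1$ collapses the $l\neq 0$ constraints into the single recursion $\gamma_{l,q}^k=\gamma_{kl\bmod n,q}$, avoiding the notational awkwardness at $kl'\equiv 0\bmod n$ — but the underlying argument is the same.
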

\begin{proof}
Every element in $K(\pn)_{\mathbb{C}}$ can be written as $$\alpha\id+\sum_{l,q=0}^{n-1}\beta_l^qu_l^q$$  We would like to find the virtual line elements with respect to these generators.  Recall that the virtual line elements are determined by the equation $$\stack{L}^k=\widetilde{\psi}^k(\stack{L})$$ with respect to the virtual multiplication and Adams operations, for all positive integers $k$.  First, consider the case where $\alpha=0$, so that $\stack{L}=\sum_{l,q=0}^{n-1}\beta_l^qu_l^q$.  Then if $k=n$, we have $$\widetilde{\psi}^n(\stack{L})=\sum_{q=0}^{n-1}n\beta_0^qu_0^q$$ and by the relations above, $$\stack{L}^n=\sum_{l=1}^{n-1}\sum_{q=0}^{n-1}(\beta_l^q)^nu_l^q.$$  Thus, it is immediate that $\beta_l^q=0$ for all $l$ and $q$, which contradicts the invertibility of $\stack{L}$.  Thus, we may assume that $\alpha\neq 0$.  Write $\widetilde{\beta}_l^q=\beta_l^q/\alpha$ and $l=\frac{n}{d}b+r$ for some $b$ and $r$, and $d=\gcd(k,n)$.  In this case, 
\begin{align*} 
\widetilde{\psi}^k(\stack{L}) &= \alpha\mathbf{1}+\sum_{q=0}^{n-1}\sum_{i=1}^{d}\beta_l^qu_{s_i}^q+\sum_{q=0}^{p-1}\beta_0^qku_0^q \\
&= \alpha\mathbf{1}+\sum_{q=0}^{n-1}\sum_{r=0}^{\frac{n}{d}-1}\sum_{b=0}^{d-1}\beta_{rk\bmod{n}}^qu_l^q+\sum_{q=0}^{n-1}\beta_0^qku_0^q,
\end{align*}
for values of $k\not\equiv 0\bmod{n}$.  If $k\equiv 0\bmod{n}$, then $$\widetilde{\psi}^k(\stack{L})=\alpha\mathbf{1}+\sum_{q=0}^{n-1}\beta_0^qku_0^q.$$
On the other hand,
\begin{align*} \stack{L}^k &= \Big(\alpha\id+\sum_{l,q=0}^{n-1}\beta_l^qu_l^q\Big)^k \\ &= \alpha^k\Big(\mathbf{1}+\sum_{l=1}^{n-1}\sum_{q=0}^{n-1}\widetilde{\beta}_l^qu_l^q+\sum_{q=0}^{n-1}\widetilde{\beta}_0^qu_0^q\Big)^k \\ &= \alpha^k\Big(\mathbf{1}+\sum_{l=1}^{n-1}\sum_{q=0}^{n-1}((1+\widetilde{\beta}_l^q)^k-1)u_l^q+k\sum_{q=0}^{n-1}\widetilde{\beta}_0^q\Big).
\end{align*}
In order for equation \eqref{eq.inpsi} to be satisfied, the following three conditions must hold
\begin{align*} \alpha^k &= \alpha, & \alpha^k((1+\widetilde{\beta}_l^q)^k-1) &= \beta_{rk\bmod{n}}^q, & \alpha^kk\widetilde{\beta}_0^q=k\beta_0^q. \end{align*}
Since these equations must hold for all positive integers $k$, the first condition implies that $\alpha=1$.  Thus, the second and third conditions reduce to
\begin{align*} (1+\beta_l^q)^k-1 &= \beta_{rk\bmod{n}}^q, & k\beta_0^q &= k\beta_0^q. \end{align*}
The third condition is satisfied for any choice of $\beta_0^q\in\mathbb{C}$.  If $r=1$ and $k=n$, then the second equation states that 
\begin{align*} (1+\beta_{nb/d+1}^q)^n &= 0 \\  \beta_{nb/d+1}^q &= \zeta^{f_q}-1, \end{align*}
for some $f_q=0,\ldots,n-1$.
Further, each $\beta_k^q$ is determined by $\beta_{nb/d+1}^q$ as follows:
\begin{align*} (1+\beta_{nb/d+1}^q)^k-1 &= \beta_k^q \\ 
(\zeta^{f_q})^k-1 &= \beta_k^q \\ 
\zeta^{kf_q}-1 &= \beta_k^q.
\end{align*}
Therefore, a general line element in terms of the generators above has the form $$\stack{L}=\mathbf{1}+\sum_{q=0}^{n-1}\sum_{l=1}^{n-1}(\zeta^{lf_q}-1)u_l^q+\sum_{q=0}^{n-1}\beta_0^qu_0^q,$$ where $f_q\in\{0,\ldots,n-1\}$ and $\beta_0^q\in\mathbb{C}$.  Write this virtual line element as $\stack{L}(f_0,\ldots,f_{n-1};\beta_0^0,\ldots,\beta_0^{n-1})$.  Let $\stack{L}$ and $\stack{L}'$ be virtual line elements, then
\begin{align*} 
\stack{L}\stack{L}' &=  \mathbf{1}+\sum_{q=0}^{n-1}\sum_{l=1}^{n-1}((\zeta^{lf_q}-1)+(\zeta^{lf_q}-1)(\zeta^{lf'_q}-1)+(\zeta^{lf'_q}-1))u_l^q+\sum_{q=0}^{n-1}(\beta_0^q+{\beta'}_0^q)u_0^q \\
&=  \mathbf{1}+\sum_{q=0}^{n-1}\sum_{l=1}^{n-1}(\zeta^{l(f_q+f_q')}-1)u_l^q+\sum_{q=0}^{n-1}(\beta_0^q+{\beta'}_0^q)u_0^q.
\end{align*}
Thus, multiplication of virtual line elements has the following property: 
\begin{equation} \label{eq.linemult} \stack{L}\stack{L}'=\stack{L}(f_0+f'_0,\ldots,f_{n-1}+f'_{n-1};\beta_0^0+{\beta'}_0^0,\ldots,\beta_0^{n-1}+{\beta'}_0^{n-1}).\end{equation}
Note that this implies that $\stack{L}(f_0,\ldots,f_{p-1};\beta_0^0,\ldots,\beta_0^{p-1})^{-1}=\stack{L}(-f_0,\ldots,-f_{p-1};-\beta_0^0,\ldots,-\beta_0^{n-1})$.  Thus, the map $\Phi$ as defined above is an isomorphism of groups.
\end{proof}

\begin{prop} The virtual line elements $\mathcal{P}$ span $K(I\pn)_{\mathbb{C}}$ as a complex vector space.
\end{prop}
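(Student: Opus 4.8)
The plan is to show that the $\mathbb{C}$-linear span $V:=\operatorname{span}_{\mathbb{C}}\mathcal{P}$ of the virtual line elements equals $K(I\pn)_{\mathbb{C}}$, by checking that $V$ contains $\mathbf{1}$, every $u_0^q$ ($0\le q\le n-1$), and every $u_l^q$ with $1\le l\le n-1$ and $0\le q\le n-1$. These elements do span $K(I\pn)_{\mathbb{C}}$: by the multiplicative relations among the generators of the presentation $\mathbb{C}[u_l^q,1_{0l}]/I$ obtained above, the vector space $K(I\pn)_{\mathbb{C}}$ is spanned by the $u_l^q$ together with the $1_{0l}$, and the relations $1_{0l}=\sum_{q=0}^{n-1}u_l^q$ (for $l\neq 0$) and $1_{00}=\mathbf{1}-\sum_{l=1}^{n-1}1_{0l}$ then rewrite everything in terms of $\mathbf{1}$, the $u_0^q$, and the $u_l^q$ with $l\ge 1$. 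The input to the argument is the explicit shape of a general virtual line element from the preceding proposition,
$$\stack{L}(f_0,\dots,f_{n-1};\beta_0^0,\dots,\beta_0^{n-1})=\mathbf{1}+\sum_{q=0}^{n-1}\sum_{l=1}^{n-1}(\zeta^{lf_q}-1)\,u_l^q+\sum_{q=0}^{n-1}\beta_0^q\,u_0^q,$$
with $f_q\in\{0,\dots,n-1\}$ and $\beta_0^q\in\mathbb{C}$ arbitrary.

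First, $\mathbf{1}=\stack{L}(0,\dots,0;0,\dots,0)\in\mathcal{P}\subseteq V$. Second, for each fixed $q$, the virtual line element with all $f_{q'}=0$, with $\beta_0^q=1$, and with $\beta_0^{q'}=0$ for $q'\neq q$ equals $\mathbf{1}+u_0^q$; as it and $\mathbf{1}$ both lie in $V$, so does $u_0^q$. Third, fix an index $q_0$, take $f_q=0$ for $q\neq q_0$ and all $\beta_0^q=0$, and let $a:=f_{q_0}$ run through $0,1,\dots,n-1$; the resulting virtual line element equals $\mathbf{1}+w^{(a)}$, where $w^{(a)}:=\sum_{l=1}^{n-1}(\zeta^{la}-1)\,u_l^{q_0}$, so $w^{(a)}\in V$ for all $a$. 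It remains only to see that the $w^{(a)}$ span $\operatorname{span}_{\mathbb{C}}\{u_l^{q_0}:1\le l\le n-1\}$, and for that it is enough that the $n-1$ coefficient vectors $v_a:=(\zeta^{la}-1)_{l=1}^{n-1}\in\mathbb{C}^{n-1}$, $a=1,\dots,n-1$, form a basis of $\mathbb{C}^{n-1}$.

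This last point is the crux of the proof. A relation $\sum_{a=1}^{n-1}c_a v_a=0$ is equivalent to $\sum_{a=1}^{n-1}c_a(\zeta^{la}-1)=0$ for every $l=1,\dots,n-1$, that is, to the vanishing at $\zeta,\zeta^2,\dots,\zeta^{n-1}$ of the polynomial $p(x):=\sum_{a=1}^{n-1}c_a x^a-\sum_{a=1}^{n-1}c_a$, which has degree at most $n-1$ and also satisfies $p(1)=0$; having $n$ distinct roots, $p$ vanishes identically, so all $c_a=0$. Hence the $v_a$ are a basis of $\mathbb{C}^{n-1}$, the $w^{(a)}$ span $\operatorname{span}_{\mathbb{C}}\{u_l^{q_0}:l\}$, and therefore $u_l^{q_0}\in V$ for every $l$; letting $q_0$ vary, $u_l^q\in V$ for all $l$ and $q$. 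Combining the three steps, $V$ contains a spanning set of $K(I\pn)_{\mathbb{C}}$, whence $V=K(I\pn)_{\mathbb{C}}$. The only step that is not bookkeeping is this Vandermonde-type nonvanishing, which the root count disposes of immediately, so I do not anticipate a genuine obstacle.
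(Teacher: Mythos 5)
Your proof is correct and follows the same skeleton as the paper's: extract $\mathbf{1}$ and the $u_0^q$ directly from line elements, then reduce the remaining generators $u_l^q$ ($l\neq 0$) to the invertibility of the $(n-1)\times(n-1)$ matrix with entries $\zeta^{la}-1$ (the matrix the paper calls $B$). Where you genuinely diverge is in the proof of that invertibility, which is the crux in both treatments. The paper computes $B^2$ explicitly, observes it is $n$ times the all-ones matrix plus $n$ times a permutation matrix, and exhibits $n-1$ linearly independent eigenvectors to conclude $\det B\neq 0$; you instead interpret a linear relation among the rows as a polynomial $p(x)=\sum_a c_a x^a-\sum_a c_a$ of degree at most $n-1$ vanishing at all $n$ of the $n$-th roots of unity (the point $x=1$ coming for free), forcing $p\equiv 0$ and hence all $c_a=0$. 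Your root-counting argument is shorter, avoids the matrix computation entirely, and sidesteps a step in the paper that is stated rather tersely (the listed vectors are eigenvectors of $B^2$ only after one recognizes the all-ones-plus-permutation structure); the paper's computation, on the other hand, yields the eigenvalue data of $B^2$ as a byproduct, which your argument does not. Both establish the needed nonvanishing, and your bookkeeping for why $\{\mathbf{1}\}\cup\{u_l^q\}$ spans $K(I\pn)_{\mathbb{C}}$ is consistent with the presentation given in the paper.
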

\begin{proof}  We show that every generator $u_l^q$ and $\mathbf{1}$ can be written as a linear combination of line elements.  Note that $\mathbf{1}=\stack{L}(0,\ldots,0;0,\ldots,0)$.  By subtracting the constant term from the line element $\stack{L}(0,\ldots,0;0,\ldots,0,1,0,\ldots,0)$, we obtain $u_0^q$.  Let $M$ be the $n(n-1)\times n^n$ matrix whose rows are given by pairs $(l,q)$ and columns by $n$-tuples $(f_0,\ldots,f_{n-1})$, where an entry of $M$ is $(\zeta^{lf_q}-1)$.  Construct a submatrix $A$ of $M$ whose columns are the columns of $M$ determined by the $n$-tuples $(\alpha,0,\ldots,0),\,(0,\alpha,0,\ldots,0),\ldots,\,(0,\ldots,0,\alpha)$ for each $\alpha\in\mathbb{Z}/n\mathbb{Z}$.  The matrix $A$ is a block diagonal matrix, whose diagonal blocks, $B$, are the $(n-1)\times(n-1)$ matrices 
$$B=\begin{pmatrix}
\zeta-1 & \zeta^2-1 & \cdots & \zeta^{n-1}-1 \\
\zeta^2-1 & \zeta^{2(2)}-1 & \cdots & \zeta^{2(n-1)}-1 \\
\vdots & \vdots & \ddots & \vdots \\
\zeta^{n-1}-1 & \zeta^{2(n-1)}-1 & \cdots & \zeta^{(n-1)(n-1)}-1
\end{pmatrix}.$$
Squaring the matrix $B$ yields the real matrix
$$B^2=\begin{pmatrix} n & n & \cdots & 2n \\
\vdots & \vdots & \iddots & \vdots \\
n & 2n & \cdots & n \\
2n & n & \cdots & n
\end{pmatrix}.$$
This matrix has a linearly independent set of $n-1$ eigenvectors $$\begin{pmatrix} 1 \\ 1 \\ 1 \\ \vdots \\ 1 \end{pmatrix}, \hspace{.1in} \begin{pmatrix} 1\\ -1 \\ 0 \\ \vdots \\ 0 \end{pmatrix}, \hspace{.1in} \begin{pmatrix} 1\\0\\-1\\ \vdots\\0\end{pmatrix}, \cdots, \begin{pmatrix} 1\\0\\0\\ \vdots\\-1\end{pmatrix}.$$
As $A$ is block diagonal, $A^2$ is also block diagonal with diagonal block entries equal to $B^2$.  Thus, $A$ has a linearly independent set of $n(n-1)$ eigenvectors given by placing the eigenvectors above in a column vector with the rest of the entries equal to zero.  Thus, the rank of $A^2$ is $n(n-1)$, and so the determinant is nonzero.  This implies the determinant of $A$ is nonzero, and so we conclude that the rank of $A$ is $n(n-1)$.  Therefore, the rank of $M$ is $n(n-1)$ as well.
\end{proof}

\section{Comparing the virtual K-theory of $\pn$ with the K-theory of its crepant resolution}
To obtain the result from \cite{EJK2} for $\pn$, we need to show that the localization ${\mathcal{K}_0}_{\mathbb{C}}$ with its virtual product is isomorphic as a ring to the K-theory of a resolution of the cotangent bundle of $\pn$ with the ordinary product.  Recall that the K-theory ring of the resolution tensored with $\mathbb{C}$ is isomorphic as a ring to $$K(Z_n)_{\mathbb{C}}=\frac{\mathbb{C}[\chi_0^{\pm1},\ldots,\chi_{n-1}^{\pm1}]}{\langle(\chi_i-1)(\chi_j-1)\rangle},$$ where $Z_n$ is the resolution of \cite{EJK2}, discussed in Section 3 of this paper.

There are natural generators for $\mathcal{K}$ given by the line elements 
\begin{align*} 
\sigma_i &= \stack{L}(0,\ldots, 0,1,0,\ldots,0;0,\ldots,0) \\ 
\nu_j &= \mathbf{1}+\delta_j \hspace{.1in} \textup{where} \\
\delta_j &= \stack{L}(0,\ldots,0;0,\ldots,0,1,0,\ldots,0)-\stack{L}(0,\ldots,0;0,\ldots,0) \\ &= \stack{L}(0,\ldots,0;0,\ldots,0,1,0,\ldots,0)-\mathbf{1}, \end{align*}
where the 1 is in the $i$th and $j$th slots, respectively.  Note that in the proof of Proposition 7.2, we show that the line elements are spanned as a vector space by $\stack{L}(0,\ldots, 0,\alpha,0,\ldots,0;0,\ldots,0)$ and  $\stack{L}(0,\ldots,0;0,\ldots,0,\alpha,0,\ldots,0)$, for $\alpha\in\mathbb{Z}/n\mathbb{Z}$.  Since multiplication of line elements is additive in the entries of the $\stack{L}$'s, we see that $\sigma_i$ and $\nu_j$ indeed generate $\mathcal{K}$ as a ring.
\begin{thm} The virtual K-theory ring $K(I\pn)_\mathbb{C}$ admits the following presentation: $$\mathbb{C}[\sigma_0^\pm,\ldots,\sigma_{n-1}^\pm,\nu_0^\pm,\ldots,\nu_{n-1}^\pm]/I$$ where the ideal $I$ is generated by the polynomials
\begin{align*} \sigma_i^n-\mathbf{1} &  & (\nu_i-\mathbf{1})(\nu_j-\mathbf{1}) & \\ \sigma_i(\nu_j-\mathbf{1})-(\nu_j-\mathbf{1}) & & (\sigma_i-\mathbf{1})(\sigma_j-\mathbf{1}), & \hspace{.1in}\textup{if}\,\, i\neq j,  \end{align*}
for the virtual line elements $\sigma_i$ and $\nu_j$ defined above.
\end{thm}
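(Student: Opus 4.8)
The plan is to construct the natural surjective ring homomorphism $\phi$ from the presented ring onto $K(I\pn)_{\mathbb C}$ and then finish by comparing $\mathbb C$-dimensions. First I would rewrite the proposed generators in the semisimple basis $\{u_l^q\}$: from the explicit formula for a virtual line element one reads off $\sigma_i=\mathbf 1+\sum_{l=1}^{n-1}(\zeta^l-1)u_l^i$ and $\nu_j-\mathbf 1=u_0^j$. Using the multiplication table $u_{l_1}^{q_1}*u_{l_2}^{q_2}=\delta_{l_1l_2}\delta_{q_1q_2}u_{l_1}^{q_1}$ for $(l_1,l_2)\neq(0,0)$, $u_0^{q_1}*u_0^{q_2}=0$, and the fact that $\mathbf 1$ is the unit, all four relation families are readily checked in $K(I\pn)_{\mathbb C}$: $\sigma_i^n=\mathbf 1$ follows from the additivity of line-element multiplication \eqref{eq.linemult} since $f_i\in\mathbb Z/n\mathbb Z$; $(\nu_i-\mathbf 1)*(\nu_j-\mathbf 1)=u_0^i*u_0^j=0$; $\sigma_i*(\nu_j-\mathbf 1)=\nu_j-\mathbf 1$ because $u_l^i*u_0^j=0$ for $l\neq0$; and $(\sigma_i-\mathbf 1)*(\sigma_j-\mathbf 1)=0$ for $i\neq j$ because $u_l^i*u_{l'}^j=0$ once the character indices differ. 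Since $\sigma_i^{-1}=\sigma_i^{\,n-1}$ and $\nu_j^{-1}=2\cdot\mathbf 1-\nu_j$ modulo these relations, the Laurent inverses are automatic, so one gets a well-defined ring homomorphism $\phi\colon R\to K(I\pn)_{\mathbb C}$, where $R:=\mathbb C[\sigma_0^{\pm},\dots,\sigma_{n-1}^{\pm},\nu_0^{\pm},\dots,\nu_{n-1}^{\pm}]/I$. It is surjective: every line element $\stack{L}(f_0,\dots,f_{n-1};\beta_0^0,\dots,\beta_0^{n-1})$ equals $\prod_i\sigma_i^{f_i}+\sum_q\beta_0^q(\nu_q-\mathbf 1)$ by \eqref{eq.linemult}, and the virtual line elements span $K(I\pn)_{\mathbb C}$ by the previous proposition.

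The heart of the proof is a dimension count. On the target side, $\{\mathbf 1\}\cup\{u_l^q:0\le l,q\le n-1\}$ is a $\mathbb C$-basis of $K(I\pn)_{\mathbb C}$ — the localizations $\mathcal K_l$ have dimension $n$ for $l\neq0$ while $\mathcal K_0$ has dimension $n+1$ — so $\dim_{\mathbb C}K(I\pn)_{\mathbb C}=n^2+1$. On the source side, put $a_i=\sigma_i-\mathbf 1$ and $b_j=\nu_j-\mathbf 1$ in $R$; the defining relations become $a_ia_j=0$ for $i\neq j$, $a_ib_j=0$ and $b_ib_j=0$ for all $i,j$, and $(\mathbf 1+a_i)^n=\mathbf 1$. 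Hence any monomial in the $a_i,b_j$ that involves two distinct elements of $\{a_0,\dots,a_{n-1},b_0,\dots,b_{n-1}\}$, or any $b_j$ to a power at least $2$, vanishes; and expanding $(\mathbf 1+a_i)^n-\mathbf 1=\sum_{k=1}^{n}\binom{n}{k}a_i^k=0$ shows $a_i^k$ lies in the span of $a_i,\dots,a_i^{n-1}$ for every $k\ge1$. Since $R$ is generated over $\mathbb C$ by the $\sigma_i$ and $\nu_j$ (their inverses being redundant, as noted above), it is spanned by $\{\mathbf 1\}\cup\{a_i^k:0\le i\le n-1,\ 1\le k\le n-1\}\cup\{b_j:0\le j\le n-1\}$, a set of $1+n(n-1)+n=n^2+1$ elements. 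Therefore $\dim_{\mathbb C}R\le n^2+1=\dim_{\mathbb C}K(I\pn)_{\mathbb C}$, and since $\phi$ is surjective it must be an isomorphism.

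The step I expect to be the main obstacle is this last dimension bound — being confident that the four stated relation families suffice to cut the Laurent ring all the way down to dimension $n^2+1$, with no further relations needed. The argument above isolates what makes it work: the relations force every ``cross term'' among distinct generators to vanish, and the single-variable relation $(\mathbf 1+a_i)^n=\mathbf 1$ by itself controls the powers of each $a_i$; beyond that it is only bookkeeping against the explicit multiplication table of the $u_l^q$. A secondary point deserving care is the correct translation of $\sigma_i$ and $\nu_j$ into the $u_l^q$ basis, since both the verification of the relations and the surjectivity statement rest on it.
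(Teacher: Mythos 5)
Your proposal is correct, and for the parts the paper actually argues it follows the same route: the relations are verified exactly as in the paper, via the additivity property \eqref{eq.linemult} of virtual line elements and the multiplication table of the $u_l^q$, and surjectivity comes from the fact that the $\sigma_i$, $\nu_j$ generate $\mathcal{P}$, which spans $K(I\pn)_{\mathbb{C}}$ by Proposition 7.2. Where you go beyond the paper is the injectivity step: the paper's two-sentence proof never explains why $I$ is the \emph{entire} kernel, i.e.\ why no further relations are needed. Your dimension count supplies this — reducing to the elements $a_i=\sigma_i-\mathbf{1}$, $b_j=\nu_j-\mathbf{1}$, observing that all cross-terms and all $b_j^2$ vanish while $(\mathbf{1}+a_i)^n=\mathbf{1}$ bounds the powers of each $a_i$, so the source has dimension at most $1+n(n-1)+n=n^2+1=\dim_{\mathbb{C}}K(I\pn)_{\mathbb{C}}$ — and this is exactly the argument that makes the theorem a genuine presentation rather than merely a list of valid relations. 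Your translation $\sigma_i=\mathbf{1}+\sum_{l=1}^{n-1}(\zeta^l-1)u_l^i$ and $\nu_j-\mathbf{1}=u_0^j$ is also correct, as is the identification of $\{\mathbf{1}\}\cup\{u_l^q\}$ as a basis of the target. In short: same strategy, but your write-up closes the gap the paper leaves open.
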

\begin{proof}  The virtual line elements $\sigma_i$ and $\nu_j$ generate $\mathcal{P}$ by the multiplication property \eqref{eq.linemult} of virtual line elements.  Applying this property again, the relations in the ideal $I$ follow easily.
\end{proof}

We will now use Theorem 8.1 to find a presentation for the localization $\mathcal{K}_0$ with complex coefficients.  This will be isomorphic as a $\psi$-ring to the ordinary K-theory of the resolution $Z_n$, and by establishing an isomorphism as $\psi$-rings between ${\mathcal{K}_0}_{\mathbb{C}}$ and the virtual augmentation completion $\widehat{K}(I\pn)_{\mathbb{C}}$ we will have a generalization of Proposition 7.22 of \cite{EJK2}.

By the definition of the virtual line elements $\stack{L}$, the map $\Gamma_0:=\Gamma|_{\mathcal{K}_0}$ takes 
$$\begin{xymatrix}{\mathcal{K} \ar[dd]^{\Gamma_0} \\ \\ \mathcal{K}_0}\end{xymatrix} \hspace{.2in} \begin{xymatrix}{\mathbf{1}+\displaystyle\sum_{l=1}^{p-1}\sum_{q=0}^{p-1}(\zeta^{lf_q}-1)u_l^q+\displaystyle\sum_{q=0}^{p-1}\beta_0^qu_0^q \ar@{|->}[d] \\ 1_{00}+\displaystyle\sum_{q=0}^{p-1}\beta_0^qu_0^q}
\end{xymatrix}$$
Note that this implies that all the generators $\sigma_i$ will be mapped to the identity $1_{00}$.  In terms of the presentation of $\mathcal{K}$ above, we can interpret $\Gamma_0$ by the canonical projection
$$\begin{xymatrix} {{\mathcal{K}}_{\mathbb{C}}\cong\mathbb{C}[\sigma_0^{\pm1},\ldots,\sigma_{n-1}^{\pm1},\nu_0^{\pm1},\ldots,\nu_{n-1}^{\pm1}]/I \ar[d] \\ {\mathcal{K}_0}_{\mathbb{C}}\cong\mathbb{C}[\sigma_0^{\pm1},\ldots,\sigma_{n-1}^{\pm1},\nu_0^{\pm1},\ldots,\nu_{n-1}^{\pm1}]/I'}\end{xymatrix}$$
where $I'$ is generated by the same polynomials as $I$ with the addition of the polynomials $\sigma_i-\mathbf{1}$.  Further, the target ring is isomorphic as a ring to $$\frac{\mathbb{C}[\widehat{\nu}_0^{\pm1},\ldots,\widehat{\nu}_{n-1}^{\pm1}]}{\langle(\widehat{\nu}_i-1)(\widehat{\nu}_j-1)\rangle}$$
via a ring isomorphism sending $\nu_i\mapsto\widehat{\nu}_i$ and $\sigma_j\mapsto 1$.  In the proof of Proposition 7.22 of \cite{EJK2}, it is noted that since the virtual K-theory is the quotient of the coordinate ring of a torus, the virtual augmentation completion $\widehat{K}(I\pn)_{\mathbb{C}}$ is the localization of $K(I\pn)_{\mathbb{C}}$ at the maximal ideal corresponding to the identity of the torus.  But this is exactly the localization $\mathcal{K}_0$.  Thus, we have that $$\widehat{K}(I\pn)_{\mathbb{C}}\cong \frac{\mathbb{C}[\widehat{\nu}_0^{\pm1},\ldots,\widehat{\nu}_{n-1}^{\pm1}]}{\langle(\widehat{\nu}_i-1)(\widehat{\nu}_j-1)\rangle}.$$  This is clearly isomorphic to the K-theory of the resolution, so we have proved the following theorem.
\begin{thm} For all integers $n\ge 2$, there is an isomorphism of $\psi$-rings $$\widehat{K}(I\pn)_{\mathbb{C}}\cong K(Z_n)_\mathbb{C},$$ with respect to the virtual product on the virtual augmentation completion and the ordinary product on the K-theory of $Z_n$, where $Z_n$ is the toric crepant resolution of $\pn$ from \cite{EJK}.
\end{thm}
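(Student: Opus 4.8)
The plan is to bootstrap from the ring-level statements already assembled in this section and upgrade them to a statement about Adams operations. The underlying ring isomorphism $\widehat{K}(I\pn)_{\mathbb{C}}\cong K(Z_n)_{\mathbb{C}}$ is immediate from the preceding discussion: Theorem 8.1 presents $K(I\pn)_{\mathbb{C}}$ in terms of the virtual line elements $\sigma_i,\nu_j$, the map $\Gamma_0$ is the projection onto the $l=0$ summand (killing the $\sigma_i-\mathbf{1}$, so that $\ker\Gamma_0=\bigoplus_{l\neq0}\mathcal{K}_l$), hence $\mathcal{K}_0\cong\mathbb{C}[\widehat{\nu}_0^{\pm1},\dots,\widehat{\nu}_{n-1}^{\pm1}]/\langle(\widehat{\nu}_i-1)(\widehat{\nu}_j-1)\rangle$, and this matches \eqref{eq.kres} via $\widehat{\nu}_i\mapsto\chi_i$; combined with the identification $\widehat{K}(I\pn)_{\mathbb{C}}\cong\mathcal{K}_0$ recalled above (following \cite{EJK2}), this gives the isomorphism of rings. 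So the real content is compatibility with the $\psi$-operations. First I would invoke that $(K(I\pn)_{\mathbb{C}},*,\mathbf{1},\widetilde{\epsilon},\widetilde{\psi})$ is an augmented $\psi$-ring (Theorem 5.16 of \cite{EJK2}): each $\widetilde{\psi}^k$ fixes $\widetilde{\epsilon}$, hence preserves the virtual augmentation ideal and all its powers, so the $\widetilde{\psi}^k$ extend continuously to $\widehat{K}(I\pn)_{\mathbb{C}}$ and the completion map $c$ becomes a homomorphism of $\psi$-rings; this is the $\psi$-structure meant in the statement. (On the localization side the same thing can be seen directly from Proposition 6.1: since every solution of $ky\equiv l\bmod n$ with $l\neq0$ is nonzero, $\widetilde{\psi}^k$ carries $\ker\Gamma_0=\bigoplus_{l\neq0}\mathcal{K}_l$ into itself and therefore descends to $\mathcal{K}_0$.)

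Next I would evaluate both $\psi$-structures on ring generators. On the virtual side, $\nu_i=\stack{L}(0,\dots,0;0,\dots,0,1,0,\dots,0)$ is itself a virtual line element, so $\widetilde{\psi}^k(\nu_i)=\nu_i^{*k}$; since $(\nu_i-\mathbf{1})^{*2}=0$ (a defining relation in Theorem 8.1, or directly from \eqref{eq.linemult}), this equals $\mathbf{1}+k(\nu_i-\mathbf{1})$, and projecting through $\Gamma_0$ gives $\widetilde{\psi}^k(\widehat{\nu}_i)=1+k(\widehat{\nu}_i-1)=\widehat{\nu}_i^{\,k}$. On the resolution side, the $\chi_i\in K(Z_n)_{\mathbb{C}}$ are classes of line bundles, so by \eqref{eq.ordlineelt} their ordinary Adams operations satisfy $\psi^k(\chi_i)=\chi_i^{k}=1+k(\chi_i-1)$, using $(\chi_i-1)(\chi_j-1)=0$. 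Hence the isomorphism $\widehat{\nu}_i\mapsto\chi_i$ carries $\widetilde{\psi}^k$ to $\psi^k$ on the generators; since the $\widetilde{\psi}^k$ and $\psi^k$ are ring homomorphisms and the rings $\widehat{K}(I\pn)_{\mathbb{C}}$ and $K(Z_n)_{\mathbb{C}}$ are generated by the $\widehat{\nu}_i^{\pm1}$, respectively the $\chi_i^{\pm1}$, the isomorphism intertwines the Adams operations everywhere, yielding the desired isomorphism of $\psi$-rings.

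The step I expect to require the most care is structural rather than computational: one must be certain that the $\psi$-ring structure carried on the completion $\widehat{K}(I\pn)_{\mathbb{C}}$ is precisely the one pushed forward from the $\widetilde{\psi}^k$ along $c$ (so that the generator computation above is applicable), and that $K(Z_n)_{\mathbb{C}}$ is genuinely generated as a ring by the line bundle classes $\chi_i^{\pm1}$, so that checking Adams operations on them suffices. Both points are routine given the presentation \eqref{eq.kres} and the augmented $\psi$-ring theorem, so I do not anticipate a genuine obstruction beyond this bookkeeping.
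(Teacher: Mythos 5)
Your proposal is correct and follows essentially the same route as the paper: identify $\widehat{K}(I\pn)_{\mathbb{C}}$ with the localization $\mathcal{K}_0$, use the presentation of Theorem 8.1 to realize $\Gamma_0$ as the quotient by $\langle\sigma_i-\mathbf{1}\rangle$, and match $\widehat{\nu}_i\leftrightarrow\chi_i$ against the presentation \eqref{eq.kres}. Where you go beyond the paper's written argument is in the $\psi$-ring compatibility: the paper's proof stops at ``this is clearly isomorphic to the K-theory of the resolution,'' leaving the Adams-operation check implicit, whereas you verify it explicitly by showing $\widetilde{\psi}^k(\widehat{\nu}_i)=\widehat{\nu}_i^{\,k}=1+k(\widehat{\nu}_i-1)$ (using that $\nu_i$ is a virtual line element and $(\nu_i-\mathbf{1})^{*2}=0$) and matching this against $\psi^k(\chi_i)=\chi_i^k=1+k(\chi_i-1)$ in $K(Z_n)_{\mathbb{C}}$. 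You also correctly observe that the $\widetilde{\psi}^k$ descend to $\mathcal{K}_0$ either via the augmented $\psi$-ring axioms (preserving the augmentation ideal) or directly from Proposition 6.1 (since solutions $s_i$ of $ky\equiv l\bmod n$ with $l\neq 0$ are all nonzero, $\ker\Gamma_0$ is $\widetilde{\psi}^k$-stable). This makes your write-up a more complete version of the paper's argument rather than a different one.
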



\nocite{JKK2, CR, EJK0, ARZ}

\bibliographystyle{plain}
\bibliography{Bib}

\end{document}